\title{$L$-Fuzzy Semi-Preopen Operator in $L$-Fuzzy Topological Spaces}
\author{ \normalsize A. Ghareeb\smallskip\\
{\it\normalsize Department of Mathematics, Faculty of Science, South
Valley University, Qena,  Egypt.}\\ {\normalsize E-mail: nasserfuzt@aim.com.}}
\theoremstyle{definition}
\newtheorem{defn}{Definition}[section]
\newtheorem{thm}{Theorem}[section]
\newtheorem{lem}{Lemma}[section]
\theoremstyle{remark}
\begin{document}
\date{}
\maketitle
\begin{abstract}In this paper, we give the concept of $L$-fuzzy Semi-Preopen operator in $L$-fuzzy topological spaces, and use them to score $L$-fuzzy SP-cmpactnness in $L$-fuzzy topological spaces. We also study the relationship between $L$-fuzzy SP-compactness and SP-compactness in $L$-topological spaces.
\medskip

\noindent\textbf{Keywords:} $L$-fuzzy topology; $L$-fuzzy Semi-Preopen operator, SP-compactness.\medskip

\noindent\textbf{AMS Subject Classification:} 54A40, 54A20.
\end{abstract}

\section{\large Introduction}
C. L. Chang \cite{Chang} introduced and developed the concept of fuzzy topological spaces
based on the concept of a fuzzy set introduced by Zadeh in \cite{zadeh}. Since then, various important notions in the classical topology such as compactness and continuity have been extended to Chang's fuzzy topological spaces.\medskip

In 1980, H\"{o}hle \cite{hohle} introduced the concept of the fuzzy measurable spaces with the idea of giving degrees in [0,1] to some topological terms rather than $0$ and $1$. In 1991, from a logical point view, Ying \cite{ming} introduced the concept of fuzzifying topology and gave its base and subbase, which is established on the crisp sets not on the fuzzy set. \medskip

It is valued to note that Ying \cite{ming} also introduced another fuzzy topological space which is called bifuzzy topological space. In fact, this structure was not new in fuzzy topology, since it is already defined by Kubiak \cite{Kubiak} and \v{S}ostak \cite{sostak}.  In 1992, Ramadan \cite{ramadan} has been defined the same structure  and gave it the name smooth fuzzy topology. Finally, this structure is called  $L$-fuzzy topological space where $L$ is an appropriate lattice (see \cite{hohleandRodBOOK}). Briefly speaking, an $L$-fuzzy topology on a set $X$ assigns to every $L$-subset on $X$ a certain degree of being open, other than being definitely open or not.\medskip

In order to generalize the concepts of Semiopen and Preopen $L$-subsets, Shi \cite{4a} introduced the notions of Semiopen operator and Preopen operator in $L$-fuzzy topological spaces. He introduced the notions of $L$-fuzzy Semicontinuous, $L$-fuzzy irresolute, $L$-fuzzy Precontinuous, $L$-fuzzy Preirresolute functions in terms of Semiopen and Preopen operators and discussed some of its properties.\medskip

In this paper, we introduce the concept of $L$-fuzzy Semi-Preopen operator in $L$-fuzzy topological spaces and study some of its properties. Then we introduce and study the concept of $L$-fuzzy SP-compactness in $L$-fuzzy topological spaces. Several characterizations based on $L$-fuzzy Semi-Preopen operator are presented for $L$-fuzzy SP-compactness.

\section{\large Preliminaries}
Throughout this paper $(L,\le ,\bigwedge ,\bigvee ,')$  is a
complete DeMorgan algebra, $X$ is a nonempty set. $L^{X} $ is the
set of all $L$-subsets  on $X$. The smallest element and the largest element in $L^{X} $ are denoted by $\underline \bot$ and $\underline \top$, respectively.  A complete lattice $L$ is a complete Heyting algebra if it satisfies the following infinite distributive law: For all $a\in L$ and all $B\subset L$, \[a\wedge\bigvee B=\bigvee\{a\wedge b\,|\,b\in B\}.\]

An element $a$ in  $L$ is called a prime element if $a\ge b\wedge c$
implies $a\ge b$ or $a\ge c$. An element $a$ in $L$ is called
co-prime if $a'$ is prime \cite{6}. The set of non-unit prime
elements in  $L$ is denoted by $P(L)$. The set of non-zero co-prime
elements in $L$ is denoted by $M(L)$.\medskip

The binary relation $\ll $ in  $L$ is defined as follows: for $a$,
$b\in L$,  $a\ll b$ if and only if for every subset  $D\subseteq
L$, the relation $b\le sup \,D$ always implies the existence of
$d\in D$ with  $a\le d$ \cite{7}. In a completely distributive DeMorgan algebra $L$, each element  $b$ is a sup of $\{ a\in L|a\ll
b\} $. A set $\{ a\in L|a\ll b\}$ is called the greatest minimal
family of  $b$ in the sense of \cite{9,10}, denoted by $\beta (b)$,
and $\beta ^{*} (b)=\beta (b)\cap M(L)$.  Moreover, for $b\in L$, we
define $\alpha (b)=\{ a\in L|a'\ll b'\}$ and $\alpha ^{*}
(b)=\alpha (b)\cap P(L)$.\medskip

Let $f:X\rightarrow Y$ be a crisp mapping.
Then an L-fuzzy mapping $f_L^{\rightarrow}:L^{X}\rightarrow L^{Y}$ is
induced by $f$ as usual, i.e., $f_L^{\rightarrow}(A)(y)=\bigvee_{x\in X,\quad f(x)=y}A(x)$  and  $f_L^{\leftarrow}(B)(x)=B(f(x))$.\medskip

An $L$-topological space  is a pair $(X, \tau)$, where  $\tau$ is a subfamily of $L^{X} $ which contains $\underline \bot$;  $\underline \top$ and is closed for any suprema and finite infima. $\tau$ is called an $L$-topology on  $X$. Members of $\tau$ are called open $L$-subsets and their complements are called closed $L$-subsets.\medskip

\begin{defn}[\cite{Kubiak,sostak}]A function $\mathcal{T}\colon L^{X}\rightarrow L$ is called an  L-fuzzy topology on $X$ if it satisfies the following conditions:
\begin{enumerate}
\item[(O1)]$\mathcal{T}(\underline{\bot})=\mathcal{T}(\underline{\top})=\top$.
\item[(O2)]$\mathcal{T}(A \wedge B)\geq \mathcal{T}(A)\wedge\mathcal{T}(B)$ for each $A$, $B\in L^{X}$.
\item[(O3)]$\mathcal{T}(\bigvee_{i\in\Gamma}A_{i})\geq \bigwedge_{i\in \Gamma}\mathcal{T}(A_{i})$ for any $\{A_{i}\}_{i\in\Gamma}\subset L^{X}$.
\end{enumerate}
\end{defn}
The pair $(X,\mathcal{T})$ is called an L-fuzzy topological spaces. $\mathcal{T}(A)$ can be interpreted as the degree to which $A$ is an open $L$-subset and $\mathcal{T}(A')$ will be called the degree of closedness of $A$, where $A'$ is the $L$-complement of $A$.  A function $f:(X,\mathcal{T})\rightarrow (Y,\mathcal{U})$ is said to be  continuous with respect to $L$-fuzzy topologies $\mathcal{T}$ and $\mathcal{U}$ if $\mathcal{T}(f_L^\leftarrow(B))\geq \mathcal{U}(B)$ holds for all $B\in L^Y$.\medskip

For  $a\in L$ and  the function $\mathcal{T}:L^{X}\rightarrow L$, we use the following notation from \cite{4}. \[\mathcal{T}_{[a]} =\{ A\in L^X|\mathcal{T}(A)\ge a\}.\]

\begin{thm}[\cite{Zhang}] Let $\mathcal{T}:L^X\rightarrow L$ be a function. Then the following conditions are equivalent:\begin{enumerate}\item[(1)] $\mathcal{T}$ is an $L$-fuzy topology on $X$;
\item[(2)] $\mathcal{T}_{[a]}$ is an $L$-topology on $X$, for each $a\in M(L)$.
\end{enumerate}
\end{thm}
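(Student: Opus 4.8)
The plan is to prove the two implications separately, with the bridge between a graded statement about $\mathcal{T}$ and a crisp statement about each $\mathcal{T}_{[a]}$ supplied by the fact that in a completely distributive DeMorgan algebra every element is the join of the co-prime elements below it, i.e. $b=\bigvee\{m\in M(L)\mid m\le b\}$ for all $b\in L$. This follows from the representation $b=\bigvee\beta^{*}(b)$ recorded in the preliminaries (where each element is a sup of $\{a\mid a\ll b\}$ and these may be taken in $M(L)$), and it is the only structural input I would need beyond the three axioms.

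For $(1)\Rightarrow(2)$ I would fix $a\in M(L)$ and verify the $L$-topology axioms for $\mathcal{T}_{[a]}$ directly. Since $\mathcal{T}(\underline{\bot})=\mathcal{T}(\underline{\top})=\top\ge a$ by (O1), both $\underline{\bot}$ and $\underline{\top}$ lie in $\mathcal{T}_{[a]}$. If $A,B\in\mathcal{T}_{[a]}$, then (O2) gives $\mathcal{T}(A\wedge B)\ge\mathcal{T}(A)\wedge\mathcal{T}(B)\ge a$, so $A\wedge B\in\mathcal{T}_{[a]}$, and closure under finite infima follows by induction. If $\{A_{i}\}_{i\in\Gamma}\subseteq\mathcal{T}_{[a]}$, then (O3) gives $\mathcal{T}(\bigvee_{i}A_{i})\ge\bigwedge_{i}\mathcal{T}(A_{i})\ge a$, so $\bigvee_{i}A_{i}\in\mathcal{T}_{[a]}$. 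This direction is routine and in fact uses no property of $a$ beyond $a\in L$.

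For $(2)\Rightarrow(1)$ the idea is to reconstruct each graded inequality by testing it against every co-prime $a$ below the relevant value and then taking the supremum. For (O1): each $\mathcal{T}_{[a]}$ contains $\underline{\bot}$ and $\underline{\top}$, so $\mathcal{T}(\underline{\bot})\ge a$ and $\mathcal{T}(\underline{\top})\ge a$ for all $a\in M(L)$; joining over $M(L)$ and using $\bigvee M(L)=\top$ gives $\mathcal{T}(\underline{\bot})=\mathcal{T}(\underline{\top})=\top$. For (O2): given $A,B$ and $a\in M(L)$ with $a\le\mathcal{T}(A)\wedge\mathcal{T}(B)$, we have $A,B\in\mathcal{T}_{[a]}$, so closure under finite infima yields $A\wedge B\in\mathcal{T}_{[a]}$, i.e. $\mathcal{T}(A\wedge B)\ge a$; taking the supremum over all such $a$ and invoking join-density of $M(L)$ gives $\mathcal{T}(A\wedge B)\ge\bigvee\{a\in M(L)\mid a\le\mathcal{T}(A)\wedge\mathcal{T}(B)\}=\mathcal{T}(A)\wedge\mathcal{T}(B)$. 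The argument for (O3) is identical using closure under arbitrary suprema: any $a\in M(L)$ with $a\le\bigwedge_{i}\mathcal{T}(A_{i})$ satisfies $a\le\mathcal{T}(A_{i})$ for every $i$, hence $\bigvee_{i}A_{i}\in\mathcal{T}_{[a]}$, and joining over such $a$ recovers $\mathcal{T}(\bigvee_{i}A_{i})\ge\bigwedge_{i}\mathcal{T}(A_{i})$.

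The main obstacle is concentrated in the backward direction, specifically in justifying $b=\bigvee\{m\in M(L)\mid m\le b\}$: it is exactly this join-density of the co-prime elements that upgrades ``$\mathcal{T}(C)\ge a$ for all co-prime $a\le v$'' to ``$\mathcal{T}(C)\ge v$''. The restriction to $a\in M(L)$ in the statement is precisely what makes this representation available, since complete distributivity guarantees join-density for $M(L)$ but not for an arbitrary family of test values. Once this representation is in hand, the three verifications are mechanical, so I would isolate it as the single substantive ingredient and present everything else as direct computation.
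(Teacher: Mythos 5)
Your proof is correct, but note that there is no in-paper argument to compare it against: the paper imports this theorem from \cite{Zhang} as a known result and gives no proof. Your argument is the standard one and matches what one finds in the cited source: the forward direction is a mechanical check of the three axioms (and, as you observe, works for every $a\in L$, not just co-primes), while the backward direction rests entirely on the join-density of $M(L)$, i.e.\ $b=\bigvee\{m\in M(L)\mid m\le b\}$ for every $b\in L$. You are right to isolate that as the single substantive ingredient, and right that it is exactly what the restriction to $a\in M(L)$ is for. The one caveat worth making explicit: the paper's blanket hypothesis is only that $L$ is a \emph{complete} DeMorgan algebra, whereas join-density of the co-primes (equivalently $b=\bigvee\beta^{*}(b)$) requires \emph{complete distributivity}; the paper records $b=\bigvee\beta(b)$ only under that stronger hypothesis, and $\beta^{*}(b)$ being cofinal in $\beta(b)$ is itself a nontrivial standard fact (Liu--Luo, Wang) that the paper states only implicitly. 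So your proof is valid precisely in the completely distributive setting of \cite{Zhang}, which is the setting in which the theorem is true and in which the rest of the paper (all the $\beta$, $\beta^{*}$, $\alpha^{*}$ machinery) operates anyway.
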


\begin{defn}[\cite{13}]Let $(X,\mathcal{T})$ be an $L$-topological space, $a\in L_{\bot}$ and $G\in L^X$. A family $\mathcal{U}\subseteq L^X$ is called a
$\beta_{a}$-cover of $G$ if for any $x\in X$, it follows that
$a\in\beta(G'(x)\vee\bigvee_{A\in\mathcal{U}}A(x))$. $\mathcal{U}$
is called a strong $\beta_{a}$-cover of $G$ if $a\in\beta(\bigwedge_{x\in
X}(G'(x)\vee\bigvee_{A\in\mathcal{U}}A(x)))$.
\end{defn}

\begin{defn}[\cite{13}] Let $(X,\mathcal{T})$ be an $L$-topological space, $a\in L_{\bot}$ and $G\in L^X$. A family $\mathcal{U}\subseteq L^X$ is called a
$Q_{a}$-cover of $G$ if for any $x\in X$, it follows that $G'(x)\vee\bigvee_{A\in\mathcal{U}}A(x)\geq a$.
\end{defn}

It is obvious that a strong $\beta_{a}$-cover of $G$ is a
$\beta_{a}$-cover of $G$, and a $\beta_{a}$-cover of $G$ is a
$Q_{a}$-cover of $G$.

\begin{defn}[\cite{13}] Let $(X,\mathcal{T})$ be an $L$-topological space, $a\in L_\top$ and $G\in L^X$. A family $\mathcal{A}\subseteq L^X$ is said to be:
\begin{enumerate}
\item[(1)] an $a$-shading of $G$ if for any $x\in X$, $(G'(x)\vee\bigvee_{A\in\mathcal{A}}A(x))\not\leq a$.
\item[(2)] a strong $a$-shading of $G$ if $\bigwedge_{x\in X}(G'(x)\vee\bigvee_{A\in\mathcal{A}}A(x))\not\leq a$.
\item[(3)] an $a$-remote family of $G$ if for any $x\in X$, $(G(x)\wedge\bigwedge_{B\in\mathcal{A}}B(x))\not\geq a$.
\item[(4)] a strong $a$-remote family of $G$ if $\bigvee_{x\in X}(G(x)\wedge\bigwedge_{B\in\mathcal{A}}B(x))\not\geq a$.
\end{enumerate}
\end{defn}

\begin{defn}[\cite{Thakur}] Let $(X ,\mathcal{T})$ be an L-topological space. An $L$-subset $A\in L^X$ is called Semi-Preopen if there is a Preopen subset $B$ such that $B\leq A \leq cl(B)$.
\end{defn}

\begin{defn}[\cite{BAI}] Let $(X, \mathcal{T})$ be an $L$-topological space and $G\in L^X$. Then $G$ is called fuzzy SP-compact  if for every family $\mathcal{U} \subset L^X$ of Semi-Preopen $L$-subsets, it follows that \[\bigwedge_{x\in X} \left(G'(x)\vee \bigvee_{A\in {\mathcal{U}}} A(x)\right)\le \bigvee_{\psi \in 2^{{\mathcal{U}}} } \bigwedge_{x\in X} \left(G'(x)\vee \bigvee_{A\in \psi } A(x)\right).\]
\end{defn}

\begin{defn}[\cite{4c}] Let $(X,\mathcal{T})$ be an $L$-fuzzy topological space. For $A\in L^X$, define the mapping  $\mathcal{T}_p:L^X\rightarrow L$ by \[\mathcal{T}_p(A)=\bigwedge_{x_\lambda\ll A}\bigvee_{x_\lambda\ll B}\left\{\mathcal{T}(B)\wedge\bigwedge_{y_\mu\ll B}\bigwedge_{y_\mu\not\leq D\geq A}(\mathcal{T}(D'))'\right\}.\]Then $\mathcal{T}_p$ is called $L$-fuzzy preopen operator induced by $\mathcal{T}$, where $\mathcal{T}_p(A)$ can be regarded as the degree to which $A$ is preopen and $\mathcal{T}_p^*(A)=\mathcal{T}_p(A')$ can be regarded as the degree to which $A$ is Preclosed.
\end{defn}

\section{\large $L$-Fuzzy Semi-Preopen Operator}
\begin{defn} Let $(X,\mathcal{T})$ be an $L$-fuzzy topological space. For any $G\in L^X$, define the mapping $\mathcal{T}_{sp}:L^X\rightarrow L$ by\[\mathcal{T}_{sp}(A)=\bigvee_{B\leq A}\left\{\mathcal{T}_p(B)\wedge\bigwedge_{x_\lambda\ll A}\bigwedge_{x_\lambda\not\leq D\geq B}(\mathcal{T}(D'))'\right\}.\] Then $\mathcal{T}_{sp}$ is called the $L$-fuzzy Semi-Preopen operator induced by $\mathcal{T}$ and $\mathcal{T}_p$, where $\mathcal{T}_{sp}(A)$ can be regarded as the degree to which $A$ is Semi-Preopen and $\mathcal{T}_{sp}^*(A)=\mathcal{T}_{sp}(A')$ can be regarded as the degree to which $A$ is Semi-Preclosed.
\end{defn}

\begin{thm} Let $(X,\mathcal{T})$ be an $L$-fuzzy topological space and $A\in L^X$. Then $A\in (\mathcal{T}_{sp})_{[a]}$ if and only if $A$ is Semi-Preopen in $\mathcal{T}_{[a]}$, where $a\in M(L)$ and $(\mathcal{T}_{sp})_{[a]}=\{A\in L^X~|~\mathcal{T}_{sp}(A)\geq a\}$.
\end{thm}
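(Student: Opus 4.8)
The plan is to deduce the level equivalence from two separate identities, one governing the Preopen factor and one governing the closure factor of the defining supremum. Since $a\in M(L)$, Theorem 2.1 tells us that $\mathcal T_{[a]}$ is a bona fide $L$-topology, so Definition 2.5 applies and ``$A$ is Semi-Preopen in $\mathcal T_{[a]}$'' means precisely that some $B\le A$ is Preopen in $\mathcal T_{[a]}$ with $A\le cl_{\mathcal T_{[a]}}(B)$. Writing $e_B=\bigwedge_{x_\lambda\ll A}\bigwedge_{x_\lambda\not\le D\ge B}(\mathcal T(D'))'$, so that $\mathcal T_{sp}(A)=\bigvee_{B\le A}\{\mathcal T_p(B)\wedge e_B\}$, I would aim to prove, for $a\in M(L)$, the two statements: $\mathcal T_p(B)\ge a$ iff $B$ is Preopen in $\mathcal T_{[a]}$, and $e_B\ge a$ iff $A\le cl_{\mathcal T_{[a]}}(B)$. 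The first I would borrow from the level theory of the Preopen operator of \cite{4c} (it is the exact analogue of the statement being proved, and is established by the same method). The bulk of the new work is the second, the closure identity.

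For the closure identity I would use that the closed $L$-subsets of $\mathcal T_{[a]}$ are exactly the $D$ with $\mathcal T(D')\ge a$, so $cl_{\mathcal T_{[a]}}(B)=\bigwedge\{D:D\ge B,\ \mathcal T(D')\ge a\}$. Decomposing $A=\bigvee\{x_\lambda:x_\lambda\ll A\}$, which is legitimate in a completely distributive $L$, the relation $A\le cl_{\mathcal T_{[a]}}(B)$ becomes ``$x_\lambda\le cl_{\mathcal T_{[a]}}(B)$ for every molecule $x_\lambda\ll A$'', and each such membership unwinds to the condition that every $D\ge B$ with $x_\lambda\not\le D$ fails to be closed. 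The task is to match this, term by term over $D$, against the quantity $(\mathcal T(D'))'$ occurring in $e_B$; here the hypothesis $a\in M(L)$ is indispensable, since it is co-primeness of $a$ (equivalently primeness of $a'\in P(L)$), together with the greatest minimal family $\beta^{*}(a)$ and the way-below relation, that licenses the passage between ``$\mathcal T(D')\not\ge a$'' and ``$(\mathcal T(D'))'\ge a$''.

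Granting both identities, the ``if'' direction is routine: a Semi-Preopen witness $B_0$ for $A$ in $\mathcal T_{[a]}$ satisfies $\mathcal T_p(B_0)\ge a$ and $e_{B_0}\ge a$, whence $\mathcal T_{sp}(A)\ge\mathcal T_p(B_0)\wedge e_{B_0}\ge a$ and $A\in(\mathcal T_{sp})_{[a]}$. The ``only if'' direction carries the main obstacle. From $\bigvee_{B\le A}\{\mathcal T_p(B)\wedge e_B\}\ge a$ one would like to produce a single $B$ with $\mathcal T_p(B)\wedge e_B\ge a$, hence with $\mathcal T_p(B)\ge a$ and $e_B\ge a$ separately (the split over $\wedge$ being automatic), and then read off Semi-Preopenness of $A$; but an element of $M(L)$ is only co-prime, not completely join-prime, so it need not distribute over the arbitrary join indexed by $\{B\le A\}$, and no single index is guaranteed.

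The way I would push through is again via $\beta^{*}(a)$: for each co-prime $t\ll a$ the defining property of $\ll$ does extract some $B_t\le A$ with $t\le\mathcal T_p(B_t)$ and $t\le e_{B_t}$, i.e. a Semi-Preopen witness for $A$ at the coarser level $t$. The genuinely delicate point, and the technical heart of the argument, is to glue this cofinal family of level-$t$ witnesses into one witness working at level $a$ itself: the natural candidate $B^{*}=\bigvee\{B\le A:\mathcal T_p(B)\ge a\}$ is Preopen in $\mathcal T_{[a]}$ because Preopen $L$-subsets are closed under arbitrary joins, and $e_{B}\le e_{B^{*}}$ whenever $B\le B^{*}$ since enlarging $B$ shrinks the index set $\{D:D\ge B,\ x_\lambda\not\le D\}$; the remaining difficulty is that Preopenness and closure behave oppositely under the inclusion $\mathcal T_{[a]}\subseteq\mathcal T_{[t]}$, so transferring the $B_t$-data up to $e_{B^{*}}\ge a$ is exactly where the completely distributive structure must be exploited with care.
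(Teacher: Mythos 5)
Your setup --- writing $\mathcal T_{sp}(A)=\bigvee_{B\le A}\{\mathcal T_p(B)\wedge e_B\}$, identifying the Preopen factor with Preopenness in $\mathcal T_{[a]}$ (via the level theory of $\mathcal T_p$) and the closure factor $e_B\ge a$ with $A\le cl(B)$ in $\mathcal T_{[a]}$ --- is exactly the skeleton of the paper's proof, and your ``if'' direction is complete. The genuine gap is that your ``only if'' direction is never actually carried out. You correctly observe that $a\in M(L)$ is only co-prime, hence splits finite joins but not the arbitrary join over $\{B\le A\}$, and you propose to repair this by extracting level-$t$ witnesses $B_t$ for each $t\in\beta^{*}(a)$ and gluing them into one level-$a$ witness; but the gluing is only described, not performed, and as described it cannot work: your candidate $B^{*}=\bigvee\{B\le A:\mathcal T_p(B)\ge a\}$ need not contain any of the witnesses $B_t$ (they satisfy only $\mathcal T_p(B_t)\ge t$, so they need not enter that join), hence none of the closure data $A\le cl_{\mathcal T_{[t]}}(B_t)$ transfers to give $e_{B^{*}}\ge a$; while the alternative candidate $\bigvee_{t\in\beta^{*}(a)}B_t$ repairs the closure side (closures only grow when the set grows) but destroys the Preopen side, since each $B_t$ is Preopen only in the finer topology $\mathcal T_{[t]}$, not in $\mathcal T_{[a]}$. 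You name this tension yourself (``exactly where the completely distributive structure must be exploited with care''), but naming the obstacle is not overcoming it, so the forward implication remains unproved.

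For comparison, the paper attempts no such gluing: its proof is a single chain of equivalences in which the step you are worried about is dispatched in one line --- $\bigvee_{B\le A}\{\mathcal T_p(B)\wedge e_B\}\ge a$ is declared equivalent to the existence of a single $B\le A$ with $\mathcal T_p(B)\ge a$ and $e_B\ge a$ --- that is, the co-prime $a$ is treated as if it distributed over the arbitrary join indexed by $\{B\le A\}$. The remaining equivalences ($\mathcal T_p(B)\ge a$ iff $B\in(\mathcal T_p)_{[a]}$, and, by DeMorgan applied to the $D$-indexed infimum, $e_B\ge a$ iff every $D\ge B$ with $\mathcal T(D')\ge a$ contains each molecule $x_\lambda\ll A$, i.e.\ $A\le cl(B)$) then read off Semi-Preopenness. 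So your diagnosis points at a step the paper glosses over rather than one it resolves; but judged as a proof, your attempt is incomplete at precisely that point, while the direction you do finish coincides with the paper's argument.
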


\begin{proof}\begin{eqnarray*}A\in(\mathcal{T}_{sp})_{[a]}&\Leftrightarrow &\mathcal{T}_{sp}(A)\geq a\\
&\Leftrightarrow &\bigvee_{B\leq A}\left\{\mathcal{T}_{p}(B)\geq a\wedge\bigwedge_{x_\lambda\ll A}\bigwedge_{x_\lambda\not\leq D\geq B}(\mathcal{T}(D'))'\right\}\geq a\\
&\Leftrightarrow &\exists B\leq A\quad \mbox{such that}\quad \mathcal{T}_p(B)\geq a\quad \mbox{and}\quad \bigwedge_{x_\lambda\ll A}\bigwedge_{x_\lambda\not\leq D\geq B}(\mathcal{T}(D'))'\geq a\\
&\Leftrightarrow &\exists B\leq A\quad \mbox{such that}\quad B\in(\mathcal{T}_{p})_{[a]}\quad \mbox{and}\quad \bigwedge_{x_\lambda\ll A}\bigwedge_{x_\lambda\not\leq D\geq B}(\mathcal{T}(D'))'\geq a\\
&\Leftrightarrow &\exists B\leq A\quad \mbox{such that}\quad B\in(\mathcal{T}_{p})_{[a]}\quad \mbox{and for each}\quad x_\lambda\ll A,\quad \bigvee_{x_\lambda\not\leq D\geq B}\mathcal{T}(D')\leq a'\\
&\Leftrightarrow & \exists B\leq A \quad \mbox{such that}\quad  B\in(\mathcal{T}_p)_{[a]}\quad \mbox{and for each}\quad x_\lambda\ll A,\quad \mbox{if}\quad \bigvee_{D\geq B}\mathcal{T}(D')\geq a,\quad \mbox{then}\quad  x_\lambda\leq D\\
&\Leftrightarrow & \exists B\leq A\quad \mbox{such that}\quad B\in(\mathcal{T}_p)_{[a]}\quad \mbox{and}\quad A\leq cl(B)\\
&\Leftrightarrow & A\quad \mbox{is Semi-Preopen.}
\end{eqnarray*}
\end{proof}

\begin{defn} Let $(X,\mathcal{T}_1)$ and $(Y,\mathcal{T}_2)$ be two $L$-fuzzy topological spaces. The function $f:(X,\mathcal{T}_1)\rightarrow (Y,\mathcal{T}_2)$ is called:\begin{enumerate}
\item[(1)] $L$-fuzzy Semi-Precontinuous function if $\mathcal{T}_2(V)\leq(\mathcal{T}_1)_{sp}(f_L^\leftarrow(V))$ holds for each $V\in L^Y$.
\item[(2)] $L$-fuzzy Semi-Preirresolute if $(\mathcal{T}_2)_{sp}(V)\leq(\mathcal{T}_1)_{sp}(f_L^\leftarrow(V))$ holds for each $V\in L^Y$.
\end{enumerate}
\end{defn}

\begin{thm}Let $(X,\mathcal{T})$ and $(Y,\mathcal{U})$ be two $L$-fuzzy topological spaces. Then $f: (X,\mathcal{T})\rightarrow (Y,\mathcal{U})$ is $L$-fuzzy semi-precontinuous if and only if $f:(X,(\mathcal{T})_{[a]})\rightarrow (Y,(\mathcal{U})_{[a]})$ is $L$-semi-precontinuous for each $a\in M(L)$.
\end{thm}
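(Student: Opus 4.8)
The plan is to reduce both directions to Theorem 3.1, which identifies the level set $(\mathcal{T}_{sp})_{[a]}$ with the Semi-Preopen $L$-subsets of the $L$-topology $\mathcal{T}_{[a]}$ for every $a\in M(L)$. I would first unwind the two notions of continuity into their working forms: by Definition 3.2(1), $f$ is $L$-fuzzy Semi-Precontinuous exactly when $\mathcal{U}(V)\le\mathcal{T}_{sp}(f_L^\leftarrow(V))$ for all $V\in L^Y$, while $f\colon(X,\mathcal{T}_{[a]})\to(Y,\mathcal{U}_{[a]})$ being $L$-semi-precontinuous means that $f_L^\leftarrow(V)$ is Semi-Preopen in $\mathcal{T}_{[a]}$ for every $V$ open in $\mathcal{U}_{[a]}$, i.e. for every $V$ with $\mathcal{U}(V)\ge a$.

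For the forward implication I would fix $a\in M(L)$ and take any $V$ open in $\mathcal{U}_{[a]}$, so $\mathcal{U}(V)\ge a$. The semi-precontinuity inequality then gives $\mathcal{T}_{sp}(f_L^\leftarrow(V))\ge\mathcal{U}(V)\ge a$, hence $f_L^\leftarrow(V)\in(\mathcal{T}_{sp})_{[a]}$; by Theorem 3.1 this says precisely that $f_L^\leftarrow(V)$ is Semi-Preopen in $\mathcal{T}_{[a]}$, which is the desired crisp semi-precontinuity at level $a$.

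For the converse I would fix an arbitrary $V\in L^Y$ and must recover the single degree inequality $\mathcal{U}(V)\le\mathcal{T}_{sp}(f_L^\leftarrow(V))$ from the whole family of level-set statements. The device is to let $a$ range over all co-prime elements below $\mathcal{U}(V)$: for each $a\in M(L)$ with $a\le\mathcal{U}(V)$ we have $V$ open in $\mathcal{U}_{[a]}$, so the hypothesis makes $f_L^\leftarrow(V)$ Semi-Preopen in $\mathcal{T}_{[a]}$, and Theorem 3.1 yields $f_L^\leftarrow(V)\in(\mathcal{T}_{sp})_{[a]}$, that is $\mathcal{T}_{sp}(f_L^\leftarrow(V))\ge a$. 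Passing to the supremum over all such $a$ gives $\mathcal{T}_{sp}(f_L^\leftarrow(V))\ge\bigvee\{a\in M(L)\mid a\le\mathcal{U}(V)\}$.

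The main obstacle is the final algebraic step: closing the gap by verifying $\bigvee\{a\in M(L)\mid a\le\mathcal{U}(V)\}=\mathcal{U}(V)$. This is exactly where complete distributivity of $L$ is used, since it amounts to the join-density of the co-prime elements. Concretely, every $b\in L$ satisfies $b=\bigvee\beta^*(b)$, and because $a\ll b$ forces $a\le b$ we have the inclusions $\beta^*(b)\subseteq\{a\in M(L)\mid a\le b\}\subseteq\{c\in L\mid c\le b\}$, so all three suprema are squeezed equal to $b$. Applying this with $b=\mathcal{U}(V)$ upgrades the pointwise bound to $\mathcal{T}_{sp}(f_L^\leftarrow(V))\ge\mathcal{U}(V)$, completing the converse and hence the proof.
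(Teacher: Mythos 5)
Your proposal is correct and takes essentially the same route as the paper's own proof: the forward direction passes through the level sets $(\mathcal{T}_{sp})_{[a]}$ exactly as the paper does, and the converse recovers $\mathcal{U}(V)\le\mathcal{T}_{sp}(f_L^\leftarrow(V))$ by taking a supremum over co-primes $a\le\mathcal{U}(V)$, which is precisely the paper's step $\mathcal{T}_{sp}(f_L^\leftarrow(V))\geq\bigvee M(\mathcal{U}(V))=\mathcal{U}(V)$. The only difference is that you explicitly justify the join-density of co-primes via $\beta^*(\mathcal{U}(V))$, a fact the paper invokes silently.
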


\begin{proof}$(\Rightarrow)$ Let $V\in\mathcal{U}_{[a]}$, then $\mathcal{U}(V)\geq a$. Since $f:(X,\mathcal{T})\rightarrow (Y,\mathcal{U})$ is $L$-fuzzy semi-precontinuous, $\mathcal{T}_{sp}(f_L^\leftarrow(V))\geq\mathcal{U}(V)\geq a$, so $\mathcal{T}_{sp}(f_L^\leftarrow(V))\geq a$, therefore $f_L^\leftarrow(V)\in(\mathcal{T}_{sp})_{[a]}$, this implies that $f_L^\leftarrow(V)$ is Semi-Preopen in $(X,\mathcal{T}_{[a]})$. So that $f:(X,\mathcal{T}_{[a]})\rightarrow (Y,\mathcal{U}_{[a]})$ is $L$-semi-precontinuous.\medskip

$(\Leftarrow)$ Let $\mathcal{U}(V)\geq a$ for each $a\in M(L)$, then $V\in \mathcal{U}_{[a]}$. By the semi-precontinuity of $f:(X,\mathcal{T}_{[a]})\rightarrow (Y,\mathcal{U}_{[a]})$, we have $f_L^\leftarrow(V)\in(\mathcal{T}_{sp})_{[a]}$. Accordingly, $\mathcal{T}_{sp}(f_L^\leftarrow(V))\geq a$ for each $a\in M(L)\cap M(\mathcal{U}(V))$, where $M(\mathcal{U}(V))=\{a\in M(L)|a\leq\mathcal{U}(V)\}$. It follows that \[\mathcal{T}_{sp}(f_L^\leftarrow(V))\geq\bigvee M(\mathcal{U}(V))=\mathcal{U}(V).\]
\end{proof}

\begin{thm}Let $(X,\mathcal{T})$ and $(Y,\mathcal{U})$ be two $L$-fuzzy topological spaces. The function $f: (X,\mathcal{T})\rightarrow (Y,\mathcal{U})$ is $L$-fuzzy Semi-Preirresolute if and only if $f:(X,\mathcal{T}_{[a]})\rightarrow (Y,\mathcal{U}_{[a]})$ is $L$-Semi-Preirresolute for each $a\in M(L)$.
\end{thm}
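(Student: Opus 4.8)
The plan is to mirror the proof of the preceding characterization of $L$-fuzzy Semi-Precontinuity, but with Semi-Preopenness degrees on $Y$ in place of openness degrees. The essential bridge is the level-set theorem established above, namely that $A\in(\mathcal{T}_{sp})_{[a]}$ if and only if $A$ is Semi-Preopen in $\mathcal{T}_{[a]}$ for each $a\in M(L)$; I would apply it on both $X$ and $Y$. First I would unfold the two definitions in play: $L$-fuzzy Semi-Preirresoluteness asserts $\mathcal{U}_{sp}(V)\leq\mathcal{T}_{sp}(f_L^\leftarrow(V))$ for all $V\in L^Y$, while crisp $L$-Semi-Preirresoluteness of $f\colon(X,\mathcal{T}_{[a]})\to(Y,\mathcal{U}_{[a]})$ asserts that $f_L^\leftarrow$ carries Semi-Preopen $L$-subsets of $\mathcal{U}_{[a]}$ to Semi-Preopen $L$-subsets of $\mathcal{T}_{[a]}$.

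For the forward direction I would fix $a\in M(L)$ and take $V$ Semi-Preopen in $\mathcal{U}_{[a]}$; by the level-set theorem this is $V\in(\mathcal{U}_{sp})_{[a]}$, i.e. $\mathcal{U}_{sp}(V)\geq a$. Semi-Preirresoluteness then yields $\mathcal{T}_{sp}(f_L^\leftarrow(V))\geq\mathcal{U}_{sp}(V)\geq a$, so $f_L^\leftarrow(V)\in(\mathcal{T}_{sp})_{[a]}$, and a second application of the level-set theorem makes $f_L^\leftarrow(V)$ Semi-Preopen in $\mathcal{T}_{[a]}$. This is precisely crisp Semi-Preirresoluteness at level $a$.

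For the converse I would fix an arbitrary $V\in L^Y$ and range over all $a\in M(L)$ with $a\leq\mathcal{U}_{sp}(V)$; for each such $a$ we have $V\in(\mathcal{U}_{sp})_{[a]}$, hence $V$ is Semi-Preopen in $\mathcal{U}_{[a]}$, so the crisp hypothesis makes $f_L^\leftarrow(V)$ Semi-Preopen in $\mathcal{T}_{[a]}$, giving $\mathcal{T}_{sp}(f_L^\leftarrow(V))\geq a$. Taking the join over all admissible $a$ produces $\mathcal{T}_{sp}(f_L^\leftarrow(V))\geq\bigvee\{a\in M(L)\mid a\leq\mathcal{U}_{sp}(V)\}$.

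The single nonroutine point, which I expect to be the main obstacle, is closing the converse by identifying this join with $\mathcal{U}_{sp}(V)$ itself. This rests on the property that in a completely distributive DeMorgan algebra every element is the supremum of the non-zero co-prime elements beneath it, the same fact used in the Semi-Precontinuity proof via $\bigvee M(\mathcal{U}(V))=\mathcal{U}(V)$, now applied with $\mathcal{U}(V)$ replaced by $\mathcal{U}_{sp}(V)$. Granting this equality, the chain collapses to $\mathcal{T}_{sp}(f_L^\leftarrow(V))\geq\mathcal{U}_{sp}(V)$ for every $V\in L^Y$, which is exactly $L$-fuzzy Semi-Preirresoluteness.
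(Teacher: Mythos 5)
Your proposal is correct and follows essentially the same route as the paper's own proof: the forward direction passes through the level-set theorem $(\mathcal{U}_{sp})_{[a]} \leftrightarrow$ Semi-Preopen in $\mathcal{U}_{[a]}$ exactly as the paper does, and the converse ranges over $a\in M(L)\cap M(\mathcal{U}_{sp}(V))$ and closes with $\bigvee M(\mathcal{U}_{sp}(V))=\mathcal{U}_{sp}(V)$, which is precisely the paper's final step. The ``nonroutine point'' you flag is handled in the paper by the same appeal to complete distributivity (every element being the supremum of the co-prime elements below it), so there is no gap.
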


\begin{proof}$(\Rightarrow)$ Suppose that $V$ is an $L$-Semi-Preopen in $(Y,\mathcal{U}_{[a]})$, then $V\in (\mathcal{U}_{sp})_{[a]}$, so $\mathcal{U}_{sp}(V)\geq a$. Since $f:(X,\mathcal{T})\rightarrow (Y,\mathcal{U})$ is $L$-fuzzy Semi-Preirresolute, $\mathcal{T}_{sp}(f_L^\leftarrow(V))\geq\mathcal{U}_{sp}(V)\geq a$, so $\mathcal{T}_{sp}(f_L^\leftarrow(V))\geq a$, therefore $f_L^\leftarrow(V)\in(\mathcal{T}_{sp})_{[a]}$, this implies that $f_L^\leftarrow(V)$ is Semi-Preopen in $(X,\mathcal{T}_{[a]})$. So that $f:(X,\mathcal{T}_{[a]})\rightarrow (Y,\mathcal{U}_{[a]})$ is $L$-Semi-Preirresolute.\medskip

$(\Leftarrow)$ Let $\mathcal{U}_{sp}(V)\geq a$ for each $a\in M(L)$, then $V\in(\mathcal{U}_{sp})_{[a]}$. Since $f:(X,\mathcal{T}_{[a]})\rightarrow (Y,\mathcal{U}_{[a]})$ is $L$-Semi-Preirresolute, $f_L^\leftarrow(V)\in(\mathcal{T}_{sp})_{[a]}$. Accordingly, $\mathcal{T}_{sp}(f_L^\leftarrow(V))\geq a$ for any $a\in M(L)\cap M(\mathcal{U}_{sp}(V))$, where $M(\mathcal{U}_{sp}(V))=\{a\in M(L)|a\leq\mathcal{U}_{sp}(V)\}$. It follows that \[\mathcal{T}_{sp}(f_L^\leftarrow(V))\geq\bigvee M(\mathcal{U}_{sp}(V))=\mathcal{U}_{sp}(V).\]
\end{proof}

\section{\large SP-Compactness in $L$-fuzzy Topological Spaces}

\begin{defn}Let $(X,\mathcal{T})$ be an $L$-fuzzy topological space. An $L$-subset $G\in L^X$ is called $L$-fuzzy SP-compact if for every family $\mathcal{P}\subseteq L^X$, it follows that \[\bigwedge_{F\in\mathcal{P}}\mathcal{T}_{sp}(F)\wedge\bigwedge_{x\in X}\left(G'(x)\vee\bigvee_{F\in\mathcal{P}}F(x)\right)\leq\bigvee_{\mathcal{Q}\in 2^{(\mathcal{P})}}\bigwedge_{x\in X}\left(G'(x)\vee\bigvee_{F\in\mathcal{Q}}F(x)\right).\]
Where $2^{(\mathcal{P})}$ denotes the set of all finite subfamily of $\mathcal{P}$.
\end{defn}

\begin{thm}Let $(X,\mathcal{T})$ be an $L$-fuzzy topological space. An $L$-subset $G\in L^X$ is called $L$-fuzzy SP-compact if for every family $\mathcal{W}\subseteq L^X$, it follows that
\[\bigvee_{F\in\mathcal{W}}(\mathcal{T}^*_{sp}(F))'\vee\bigvee_{x\in X}\left(G(x)\wedge\bigwedge_{F\in\mathcal{W}}F(x)\right)\geq\bigwedge_{\mathcal{H}\in 2^{(\mathcal{W})}}\bigvee_{x\in X}\left(G(x)\wedge\bigwedge_{F\in\mathcal{H}}F(x)\right).\]
\end{thm}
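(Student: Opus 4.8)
The plan is to show that this characterization is logically equivalent to the preceding definition of $L$-fuzzy SP-compactness by a direct dualization through the De Morgan laws of $L$. The bridge between the two inequalities is the identity $\mathcal{T}^*_{sp}(F)=\mathcal{T}_{sp}(F')$ from the definition of the Semi-Preopen operator, together with the involution $a''=a$ and the complete De Morgan laws $(\bigvee_i a_i)'=\bigwedge_i a_i'$ and $(\bigwedge_i a_i)'=\bigvee_i a_i'$, all of which are available since $(L,\le,\bigwedge,\bigvee,')$ is a complete De Morgan algebra (the infinite laws follow from the order-reversing involution).

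First I would set up the bijection between families given by complementation: to a family $\mathcal{W}\subseteq L^X$ associate $\mathcal{P}=\{F'\mid F\in\mathcal{W}\}$. Since $F\mapsto F'$ is an involutive bijection on $L^X$, this is a bijection between arbitrary families of $L$-subsets, and it restricts to a bijection $2^{(\mathcal{W})}\leftrightarrow 2^{(\mathcal{P})}$ between their finite subfamilies. This last point is the only piece of bookkeeping to verify, and it is immediate from the fact that complementation preserves cardinality.

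Next I would apply the defining inequality of SP-compactness to $\mathcal{P}$ and rewrite each factor in terms of $\mathcal{W}$. For the coefficient term, $\bigwedge_{H\in\mathcal{P}}\mathcal{T}_{sp}(H)=\bigwedge_{F\in\mathcal{W}}\mathcal{T}_{sp}(F')=\bigwedge_{F\in\mathcal{W}}\mathcal{T}^*_{sp}(F)$. For the pointwise term, De Morgan gives $G'(x)\vee\bigvee_{H\in\mathcal{P}}H(x)=G'(x)\vee\bigl(\bigwedge_{F\in\mathcal{W}}F(x)\bigr)'=\bigl(G(x)\wedge\bigwedge_{F\in\mathcal{W}}F(x)\bigr)'$, so that $\bigwedge_{x\in X}(\cdots)=\bigl(\bigvee_{x\in X}(G(x)\wedge\bigwedge_{F\in\mathcal{W}}F(x))\bigr)'$; the identical computation applied to each finite subfamily turns the right-hand supremum over $\mathcal{Q}\in 2^{(\mathcal{P})}$ into the complement of an infimum over $\mathcal{H}\in 2^{(\mathcal{W})}$.

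Finally I would take complements of the resulting inequality. Because complementation reverses order, the $\le$ becomes $\ge$; applying De Morgan once more to the complemented meet on the left and using $a''=a$ to cancel the double complements produces exactly the asserted inequality. The converse direction is identical run backwards, since every family $\mathcal{P}$ arises as $\{F'\mid F\in\mathcal{W}\}$ for $\mathcal{W}=\{H'\mid H\in\mathcal{P}\}$, so the two inequalities hold or fail together, family by family. The only step demanding attention is keeping the De Morgan laws consistent across the nested $\bigwedge/\bigvee$ and the finite-subfamily quantifiers; there is no genuine analytic obstacle here, as the equivalence is purely order-theoretic.
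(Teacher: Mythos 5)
Your proposal is correct, and it is precisely the argument the paper leaves implicit: the paper's proof is simply ``Straightforward,'' meaning exactly this De Morgan dualization of the defining inequality via the complement bijection $\mathcal{W}\leftrightarrow\mathcal{P}=\{F'\mid F\in\mathcal{W}\}$, the identity $\mathcal{T}^*_{sp}(F)=\mathcal{T}_{sp}(F')$, and the order-reversing involution applied to both sides. Your write-up supplies the details (including the bijection of finite subfamilies and the converse direction) correctly and in the intended way.
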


\begin{proof}Straightforward.
\end{proof}

\begin{thm}Let $(X,\mathcal{T})$ be an $L$-fuzzy topological space and $G\in L^X$. The following conditions are equivalent:\begin{enumerate}
\item[(1)] $G$ is an $L$-fuzzy SP-compact;
\item[(2)] For any $a\in M(L)$, each strong $a$-remote family $\mathcal{P}$ of $G$ with $\bigwedge_{F\in\mathcal{P}}\mathcal{T}_{sp}^*(F)\not\leq a'$ has a finite subfamily $\mathcal{H}$ which is a (strong) $a$-remote family of $G$;
\item[(3)] For any $a\in M(L)$, each strong $a$-remote family $\mathcal{P}$ of $G$ with $\bigwedge_{F\in\mathcal{P}}\mathcal{T}_{sp}^*(F)\not\leq a'$, there exists a finite subfamily $\mathcal{H}$ of $\mathcal{P}$ and $b\in\beta^*(a)$ such that $\mathcal{H}$ is a (strong) $b$-remote family of $G$;
\item[(4)] For any $a\in P(L)$, each strong $a$-shading $\mathcal{U}$ of $G$ with $\bigwedge_{F\in\mathcal{P}}\mathcal{T}_{sp}(F)\not\leq a$ has a finite subfamily $\mathcal{V}$ which is a (strong) $a$-shading of $G$;
\item[(5)] For any $a\in P(L)$, each strong $a$-shading $\mathcal{U}$ of $G$ with $\bigwedge_{F\in\mathcal{P}}\mathcal{T}_{sp}(F)\not\leq a$, there exists a finite subfamily $\mathcal{V}$ of $\mathcal{U}$ and $b\in\alpha^*(a)$ such that $\mathcal{V}$ is a (strong) $b$-shading of $G$;
\item[(6)] For any $a\in M(L)$ and $b\in\beta^*(a)$, each $Q_a$-cover $\mathcal{U}$ of $G$ with $\mathcal{T}_{sp}(F)\geq a$ ( for each $F\in\mathcal{U}$) has a finite subfamily $\mathcal{V}$ which is a $Q_b$-cover of $G$;
\item[(7)] For any $a\in M(L)$ and any $b\in\beta^*(a)$, $Q_a$-cover $\mathcal{U}$ of $G$ with $\mathcal{T}_{sp}(F)\geq a$ ( for each $F\in\mathcal{U}$) has a finit subfamily $\mathcal{V}$ which is a (strong) $\beta_a$-cover of $G$.
\end{enumerate}
\end{thm}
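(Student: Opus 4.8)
The plan is to set $C_{\mathcal P}=\bigwedge_{x\in X}\bigl(G'(x)\vee\bigvee_{F\in\mathcal P}F(x)\bigr)$ for a family $\mathcal P\subseteq L^X$, and to read the seven statements through three structural facts available in a completely distributive De~Morgan algebra: (i) the De~Morgan identity $G(x)\wedge\bigwedge_{F}F(x)=\bigl(G'(x)\vee\bigvee_{F}F'(x)\bigr)'$, which exchanges remote families and shadings; (ii) the decompositions $a=\bigvee\beta^*(a)$ for $a\in M(L)$ and $a=\bigwedge\alpha^*(a)$ for $a\in P(L)$; and (iii) the elementary identities that $\mathcal U$ is a $Q_a$-cover of $G$ exactly when $a\le C_{\mathcal U}$, and $\mathcal U$ is a strong $\beta_a$-cover of $G$ exactly when $a\ll C_{\mathcal U}$. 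I would then establish the cycle $(1)\Leftrightarrow(2)\Leftrightarrow(3)$, the dualities $(2)\Leftrightarrow(4)$ and $(3)\Leftrightarrow(5)$, and the cover loop $(1)\Rightarrow(7)\Rightarrow(6)\Rightarrow(1)$, which together give all equivalences.

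For $(1)\Leftrightarrow(2)$ I would argue from the remote-family form of SP-compactness in the preceding theorem. Given $a\in M(L)$ and a strong $a$-remote family $\mathcal P$ with $\bigwedge_{F}\mathcal T_{sp}^*(F)\not\le a'$, rewrite the two hypotheses as $a\not\le\bigvee_{F}(\mathcal T_{sp}^*(F))'$ and $a\not\le\bigvee_{x}\bigl(G(x)\wedge\bigwedge_{F}F(x)\bigr)$; co-primality of $a$ then prevents $a$ from lying below the join of these terms, which is exactly the left-hand side of that inequality, so $a\not\le\bigwedge_{\mathcal H\in 2^{(\mathcal P)}}\bigvee_{x}\bigl(G(x)\wedge\bigwedge_{F\in\mathcal H}F(x)\bigr)$, and if such an infimum is not above $a$ then one of its terms is not above $a$ — the desired finite strong $a$-remote subfamily. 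Conversely, if the inequality failed for some $\mathcal W$, I would pick $a\in\beta^*(\mathrm{RHS})$ with $a\not\le\mathrm{LHS}$ (possible since $\mathrm{RHS}=\bigvee\beta^*(\mathrm{RHS})$), observe that $\mathcal W$ is then strong $a$-remote and meets the $\mathcal T_{sp}^*$ side-condition, and contradict $(2)$. The passage $(2)\Leftrightarrow(3)$ is then immediate: $b\in\beta^*(a)$ gives $b\le a$, so a (strong) $b$-remote family is (strong) $a$-remote, while conversely $a=\bigvee\beta^*(a)$ converts an $a$-remote conclusion into a $b$-remote one for a suitable $b\in\beta^*(a)$.

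The equivalences $(2)\Leftrightarrow(4)$ and $(3)\Leftrightarrow(5)$ I would obtain purely by the substitution $F\mapsto U=F'$, $a\mapsto a'$. Fact~(i) sends a (strong) $a$-remote family of $G$ to a (strong) $a'$-shading of $G$; the identity $\mathcal T_{sp}^*(F)=\mathcal T_{sp}(F')$ sends the side-condition $\bigwedge_{F}\mathcal T_{sp}^*(F)\not\le a'$ to $\bigwedge_{U}\mathcal T_{sp}(U)\not\le a'$ with prime parameter $a'$; and complementation carries $M(L)$ onto $P(L)$ and $\beta^*(a)$ onto $\alpha^*(a')$, matching the refined $b$-parameters of $(3)$ and $(5)$.

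For the cover conditions I would use fact~(iii). For $(1)\Rightarrow(7)$, a $Q_a$-cover $\mathcal U$ with $\mathcal T_{sp}(F)\ge a$ gives $a\le\bigwedge_{F}\mathcal T_{sp}(F)\wedge C_{\mathcal U}$, so the defining inequality yields $a\le\bigvee_{\mathcal V\in 2^{(\mathcal U)}}C_{\mathcal V}$; choosing, by interpolation of $\ll$, an element $c$ with $b\ll c\ll a$, one gets $c\ll\bigvee_{\mathcal V}C_{\mathcal V}$, hence $c\le C_{\mathcal V}$ for some finite $\mathcal V$, and then $b\ll c\le C_{\mathcal V}$ upgrades to $b\ll C_{\mathcal V}$, a strong $\beta_b$-cover (the natural reading of $(7)$, whose conclusion must carry the parameter $b$). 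The step $(7)\Rightarrow(6)$ is immediate, since $b\ll C_{\mathcal V}$ forces $b\le C_{\mathcal V}$, i.e.\ a $Q_b$-cover. For $(6)\Rightarrow(1)$ I would recover the defining inequality by showing $a\le\bigvee_{\mathcal Q\in 2^{(\mathcal P)}}C_{\mathcal Q}$ for every $a\in\beta^*\bigl(\bigwedge_{F}\mathcal T_{sp}(F)\wedge C_{\mathcal P}\bigr)$: such an $a$ makes $\mathcal P$ a $Q_a$-cover with $\mathcal T_{sp}(F)\ge a$, so $(6)$ applied with each $b\in\beta^*(a)$ produces a finite $\mathcal V\subseteq\mathcal P$ with $b\le C_{\mathcal V}$, whence $a=\bigvee\beta^*(a)\le\bigvee_{\mathcal Q}C_{\mathcal Q}$, and joining over all such $a$ closes the inequality. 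I expect the genuine obstacles to be exactly the two reconstruction steps $(2)\Rightarrow(1)$ and $(6)\Rightarrow(1)$, where a full lattice inequality must be rebuilt from data indexed only by co-prime elements; here complete distributivity — through $a=\bigvee\beta^*(a)$, the nonemptiness of $\beta^*(a)$ for $a\in M(L)$, and the interpolation of $\ll$ — is what carries the argument.
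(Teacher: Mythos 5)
The paper's own proof of this theorem is the single word ``Straightforward,'' so there is no argument of the author's to compare yours against; your proposal supplies the content the paper omits, and as far as I can check it is correct. It is the standard scheme for theorems of this type (it is how the analogous characterizations of fuzzy $\alpha$-compactness are established in \cite{13}), transplanted to the operator $\mathcal{T}_{sp}$. The individual steps all check out: the DeMorgan equivalence of $\bigwedge_{F}\mathcal{T}^{*}_{sp}(F)\not\le a'$ with $a\not\le\bigvee_{F}(\mathcal{T}^{*}_{sp}(F))'$ is valid; co-primality of $a$ correctly converts the two hypotheses of (2) into $a\not\le$ the left-hand side of the dual (remote-family) form of SP-compactness given in the theorem just before this one, hence $a\not\le$ its right-hand side, and an infimum not above $a$ must have a term not above $a$; the two reconstruction steps $(2)\Rightarrow(1)$ and $(6)\Rightarrow(1)$ rest, exactly as you say, on join-density of co-primes ($m=\bigvee\beta^{*}(m)$ in a completely distributive DeMorgan algebra); the complementation dictionary (strong $a$-remote $\leftrightarrow$ strong $a'$-shading, $\mathcal{T}^{*}_{sp}(F)=\mathcal{T}_{sp}(F')$, $M(L)\leftrightarrow P(L)$, $b\in\beta^{*}(a)\Leftrightarrow b'\in\alpha^{*}(a')$) yields $(2)\Leftrightarrow(4)$ and $(3)\Leftrightarrow(5)$; and in the cover loop, writing as you do $C_{\mathcal{V}}=\bigwedge_{x\in X}\bigl(G'(x)\vee\bigvee_{F\in\mathcal{V}}F(x)\bigr)$, the relation $b\ll a\le\bigvee_{\mathcal{V}}C_{\mathcal{V}}$ produces a finite $\mathcal{V}$ with $b\le C_{\mathcal{V}}$ directly from the definition of $\ll$, while interpolation upgrades this to $b\ll C_{\mathcal{V}}$.

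Two of your silent emendations of the statement should be made explicit, because the theorem as printed needs them: in (4) and (5) the side condition must quantify over $F\in\mathcal{U}$ rather than $F\in\mathcal{P}$ (a typo), and in (7) the conclusion must be a (strong) $\beta_{b}$-cover, as you note --- with $\beta_{a}$ as printed the quantification over $b\in\beta^{*}(a)$ is idle, and $(1)\Rightarrow(7)$ would fail: an increasing countable cover in $L=[0,1]$ shows one cannot extract a single finite $\mathcal{V}$ with $a\ll C_{\mathcal{V}}$, or even $a\le C_{\mathcal{V}}$, from $a\le\bigvee_{\mathcal{V}}C_{\mathcal{V}}$. The one point to pin down in a full write-up is the contradiction in your $(2)\Rightarrow(1)$: it needs the finite subfamily delivered by (2) to be \emph{strong} $a$-remote, or else the observation that $a\in\beta^{*}(\mathrm{RHS})$ gives $a\ll\bigvee_{x}\bigl(G(x)\wedge\bigwedge_{F\in\mathcal{H}}F(x)\bigr)$ and hence a single point $x$ witnessing the contradiction with plain $a$-remoteness; either reading of the parenthetical ``(strong)'' works, so this is a matter of exposition rather than a gap.
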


\begin{proof}Straightforward.
\end{proof}

\begin{thm}Let $(X,\mathcal{T})$ be an $L$-fuzzy topological space and $G\in L^X$. If $\beta(c\wedge d)=\beta(c)\wedge\beta(d)$ for each $c$, $d\in L$, then the following conditions are equivalent:\begin{enumerate}
\item[(1)] $G$ is $L$-fuzzy SP-compact;
\item[(2)] For any $a\in M(L)$, each strong $\beta_a$-cover $\mathcal{U}$ of $G$ with $a\in\beta\left(\bigwedge_{F\in\mathcal{U}}\mathcal{T}_{sp}(F)\right)$ has a finite subfamily $\mathcal{V}$ which is a (strong) $\beta_a$-cover of $G$;
\item[(3)] For any $a\in M(L)$, each strong $\beta_a$-cover $\mathcal{U}$ of  $G$ with $a\in\beta\left(\bigwedge_{F\in\mathcal{U}}\mathcal{T}_{sp}(F)\right)$, there exists a finite subfamily $\mathcal{V}$ of $\mathcal{U}$ and $b\in M(L)$ with $a\in\beta^*(b)$ such that $\mathcal{V}$ is a (strongly) $\beta_b$-cover of $G$.
\end{enumerate}
\end{thm}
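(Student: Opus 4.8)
\emph{Plan.} I would prove the cycle $(1)\Rightarrow(2)\Rightarrow(3)\Rightarrow(1)$, throughout abbreviating $S_{\mathcal{P}}=\bigwedge_{x\in X}\left(G'(x)\vee\bigvee_{F\in\mathcal{P}}F(x)\right)$ and $T_{\mathcal{P}}=\bigwedge_{F\in\mathcal{P}}\mathcal{T}_{sp}(F)$, so that the defining inequality of $L$-fuzzy SP-compactness (Definition 4.1) reads $T_{\mathcal{P}}\wedge S_{\mathcal{P}}\le\bigvee_{\mathcal{Q}\in 2^{(\mathcal{P})}}S_{\mathcal{Q}}$. I would lean on three standard facts about the greatest minimal family in a completely distributive lattice: $\beta$ is monotone; $\beta$ distributes over arbitrary joins, i.e. $\beta\left(\bigvee_i c_i\right)=\bigcup_i\beta(c_i)$; and $\beta^*$ interpolates, i.e. for $a\in M(L)$ with $a\in\beta(c)$ there is $b\in M(L)$ with $a\in\beta^*(b)$ and $b\in\beta(c)$. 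The hypothesis $\beta(c\wedge d)=\beta(c)\wedge\beta(d)$ is exactly what fuses the covering condition and the degree condition into a single way-below statement, and it is invoked in every implication.

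For $(1)\Rightarrow(2)$, fix $a\in M(L)$ and a strong $\beta_a$-cover $\mathcal{U}$ with $a\in\beta(T_{\mathcal{U}})$. Being a strong $\beta_a$-cover means $a\in\beta(S_{\mathcal{U}})$, so by the hypothesis $a\in\beta(T_{\mathcal{U}})\cap\beta(S_{\mathcal{U}})=\beta(T_{\mathcal{U}}\wedge S_{\mathcal{U}})$. Applying the SP-compactness inequality with $\mathcal{P}=\mathcal{U}$ together with monotonicity of $\beta$ yields $a\in\beta\left(\bigvee_{\mathcal{V}\in 2^{(\mathcal{U})}}S_{\mathcal{V}}\right)$, and distributivity of $\beta$ over this join produces a finite $\mathcal{V}_0$ with $a\in\beta(S_{\mathcal{V}_0})$; that is, $\mathcal{V}_0$ is a finite strong $\beta_a$-cover of $G$, as required.

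For $(2)\Rightarrow(3)$, start from the same data and again record $a\in\beta(T_{\mathcal{U}}\wedge S_{\mathcal{U}})$. By $\beta^*$-interpolation choose $b\in M(L)$ with $a\in\beta^*(b)$ and $b\in\beta(T_{\mathcal{U}}\wedge S_{\mathcal{U}})=\beta(T_{\mathcal{U}})\cap\beta(S_{\mathcal{U}})$; thus $\mathcal{U}$ is a strong $\beta_b$-cover and $b\in\beta(T_{\mathcal{U}})$, so condition (2) applies at level $b$ and returns a finite $\mathcal{V}\subseteq\mathcal{U}$ that is a strong $\beta_b$-cover. This $\mathcal{V}$, together with $b$ (which satisfies $a\in\beta^*(b)$), witnesses (3). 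Conversely, for $(3)\Rightarrow(1)$ I would verify the defining inequality for an arbitrary family $\mathcal{P}$ by showing that every $a\in\beta^*(T_{\mathcal{P}}\wedge S_{\mathcal{P}})$ lies below the right-hand side $\bigvee_{\mathcal{Q}}S_{\mathcal{Q}}$; this suffices because $T_{\mathcal{P}}\wedge S_{\mathcal{P}}=\bigvee\beta^*(T_{\mathcal{P}}\wedge S_{\mathcal{P}})$. Splitting such an $a$ via the hypothesis gives $a\in\beta(S_{\mathcal{P}})$ and $a\in\beta(T_{\mathcal{P}})$, so $\mathcal{P}$ is a strong $\beta_a$-cover satisfying the degree premise of (3); applying (3) yields a finite $\mathcal{V}$ and $b$ with $a\in\beta^*(b)$ and $b\le S_{\mathcal{V}}$, whence $a\ll b\le S_{\mathcal{V}}\le\bigvee_{\mathcal{Q}}S_{\mathcal{Q}}$ and therefore $a\le\bigvee_{\mathcal{Q}}S_{\mathcal{Q}}$.

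The routine implication is $(1)\Rightarrow(2)$; the main obstacle is the bookkeeping in $(2)\Rightarrow(3)$, where one must produce a co-prime $b$ strictly above $a$ in the way-below order for which the finite subfamily remains a cover. This is where $\beta^*$-interpolation is essential, and it is only legitimate after the covering and degree data have been merged into the single element $T_{\mathcal{U}}\wedge S_{\mathcal{U}}$ — precisely the step that forces the extra hypothesis $\beta(c\wedge d)=\beta(c)\wedge\beta(d)$ and explains why it is absent from the preceding characterization theorem. I would also check the harmless interchange between the ``strong $\beta_b$-cover'' and ``$\beta_b$-cover'' forms, since both deliver $b\le S_{\mathcal{V}}$ and hence justify the parenthetical ``(strong)'' in the statement.
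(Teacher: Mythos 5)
The paper gives no argument to compare against: its ``proof'' of this theorem is the single word ``Straightforward.'' Judged on its own merits, your cycle $(1)\Rightarrow(2)\Rightarrow(3)\Rightarrow(1)$ is correct and supplies exactly the details the paper omits, and you invoke the hypothesis $\beta(c\wedge d)=\beta(c)\cap\beta(d)$ at precisely the three points where the covering datum and the degree datum must be fused or split. Three remarks to tighten it. First, the lattice facts you lean on (monotonicity of $\beta$, $\beta\bigl(\bigvee_i c_i\bigr)=\bigcup_i\beta(c_i)$, join-density of co-primes $d=\bigvee\beta^{*}(d)$, interpolation) hold in a \emph{completely distributive} DeMorgan algebra, not in the merely complete one announced in the paper's preliminaries; this stronger assumption is implicit throughout the paper whenever $\beta$ appears, but you should say so explicitly, with a citation to \cite{9} or \cite{10}. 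Second, your ``$\beta^{*}$-interpolation'' is true but is not usually quoted in that form, so it deserves a two-line proof: from $a\ll c$ interpolate \emph{twice}, $a\ll e\ll d\ll c$; since $d=\bigvee\beta^{*}(d)$ and $e\ll d$, there is $b\in\beta^{*}(d)$ with $e\le b$, whence $a\ll e\le b$ gives $a\in\beta^{*}(b)$ and $b\ll d\ll c$ gives $b\in\beta(c)$. A single interpolation step would not suffice, because $a\ll d$ only produces $b\in\beta^{*}(d)$ with $a\le b$, not $a\ll b$. Third, it is worth noting that in $(1)\Rightarrow(2)$ the join-preservation of $\beta$ is genuinely needed and not a convenience: applying the definition of $\ll$ directly to $a\ll T_{\mathcal{U}}\wedge S_{\mathcal{U}}\le\bigvee_{\mathcal{V}}S_{\mathcal{V}}$ would only yield some finite $\mathcal{V}_0$ with $a\le S_{\mathcal{V}_0}$, i.e.\ a $Q_a$-cover, whereas the claim requires $a\in\beta(S_{\mathcal{V}_0})$; your route through $\beta\bigl(\bigvee_{\mathcal{V}}S_{\mathcal{V}}\bigr)=\bigcup_{\mathcal{V}}\beta(S_{\mathcal{V}})$ is the correct one. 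With these justifications written out, the proof is complete, and your closing observation that either form of the conclusion (strong or not) delivers $b\le S_{\mathcal{V}}$ correctly disposes of the parenthetical ``(strong)'' in the statement.
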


\begin{proof}Straightforward.
\end{proof}

\section{\large Properties of $L$-fuzzy SP-Compactness}

\begin{defn} Let $(X,\mathcal{T})$ be an $L$-topological space, $a\in M(L)$ and $G\in L^X$. $G$ is said to be $a$-fuzzy SP-compact if and only if for each $b\in\beta(a)$, $Q_a$-Semi-Preopen cover $\mathcal{U}$ of $G$ has a finite subfamily $\mathcal{V}$ which is a $Q_b$-Semi-Preopen cover of $G$.
\end{defn}

\begin{thm}Let $(X,\mathcal{T})$ be an $L$-topological space. $G\in L^X$ is fuzzy SP-compact if and only if $G$ is $a$-fuzzy SP-compact for each $a\in M(L)$.
\end{thm}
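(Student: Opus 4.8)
The plan is to prove both implications by translating between the global inequality that defines fuzzy SP-compactness and the level-wise $Q_a/Q_b$-cover condition that defines $a$-fuzzy SP-compactness, the only real tools being complete distributivity of $L$ and the defining sup-property of the way-below relation $\ll$. Throughout I abbreviate $P(\mathcal{U})=\bigwedge_{x\in X}\bigl(G'(x)\vee\bigvee_{A\in\mathcal{U}}A(x)\bigr)$ and $R(\mathcal{U})=\bigvee_{\psi\in 2^{(\mathcal{U})}}\bigwedge_{x\in X}\bigl(G'(x)\vee\bigvee_{A\in\psi}A(x)\bigr)$, so that by the definition from \cite{BAI}, $G$ is fuzzy SP-compact precisely when $P(\mathcal{U})\le R(\mathcal{U})$ for every family $\mathcal{U}$ of Semi-Preopen $L$-subsets.

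For necessity I would assume $G$ is fuzzy SP-compact, fix $a\in M(L)$ and $b\in\beta(a)$, and take an arbitrary $Q_a$-Semi-Preopen cover $\mathcal{U}$ of $G$. The $Q_a$-cover condition gives $G'(x)\vee\bigvee_{A\in\mathcal{U}}A(x)\ge a$ for every $x$, hence $a\le P(\mathcal{U})$, and fuzzy SP-compactness upgrades this to $a\le P(\mathcal{U})\le R(\mathcal{U})$. Since $b\in\beta(a)$ means $b\ll a$, I apply the definition of $\ll$ to the set $D=\{\bigwedge_{x\in X}(G'(x)\vee\bigvee_{A\in\psi}A(x)):\psi\in 2^{(\mathcal{U})}\}$, whose supremum is $R(\mathcal{U})\ge a$; this yields a finite $\mathcal{V}\in 2^{(\mathcal{U})}$ with $b\le\bigwedge_{x\in X}(G'(x)\vee\bigvee_{A\in\mathcal{V}}A(x))$, i.e. $\mathcal{V}$ is a $Q_b$-Semi-Preopen cover of $G$. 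As $a$ and $b$ were arbitrary, $G$ is $a$-fuzzy SP-compact for every $a\in M(L)$.

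For sufficiency I would assume $G$ is $a$-fuzzy SP-compact for all $a\in M(L)$, fix an arbitrary Semi-Preopen family $\mathcal{U}$, and aim to show $P(\mathcal{U})\le R(\mathcal{U})$. Here complete distributivity enters: since $P(\mathcal{U})=\bigvee\beta^*(P(\mathcal{U}))$ (the molecular decomposition valid in completely distributive $L$), it suffices to prove $a\le R(\mathcal{U})$ for each $a\in\beta^*(P(\mathcal{U}))\subseteq M(L)$. For such an $a$ we have $a\le P(\mathcal{U})\le G'(x)\vee\bigvee_{A\in\mathcal{U}}A(x)$ for every $x$, so $\mathcal{U}$ is a $Q_a$-Semi-Preopen cover of $G$; then for each $b\in\beta(a)$, $a$-fuzzy SP-compactness furnishes a finite $\mathcal{V}\subseteq\mathcal{U}$ that is a $Q_b$-cover, whence $b\le\bigwedge_{x\in X}(G'(x)\vee\bigvee_{A\in\mathcal{V}}A(x))\le R(\mathcal{U})$. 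Taking the join over $b\in\beta(a)$ and using $a=\bigvee\beta(a)$ gives $a\le R(\mathcal{U})$, and joining over $a\in\beta^*(P(\mathcal{U}))$ gives $P(\mathcal{U})\le R(\mathcal{U})$.

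I expect the main obstacle to be the ordering of the two complete-distributivity steps in the sufficiency direction: one must first resolve the target meet $P(\mathcal{U})$ into co-prime pieces via $P(\mathcal{U})=\bigvee\beta^*(P(\mathcal{U}))$ (so that each piece lies in $M(L)$ and the hypothesis of $a$-fuzzy SP-compactness actually applies), and then separately resolve each co-prime $a$ via $a=\bigvee\beta(a)$ to recombine the family of $Q_b$-subcovers into the bound $a\le R(\mathcal{U})$. The necessity direction, by contrast, is essentially one invocation of the definition of $\ll$; its only content is the routine check that the $Q_a$-cover hypothesis is exactly what makes $a\le P(\mathcal{U})$, converting a way-below witness $b\ll a$ into the required finite $Q_b$-subcover. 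Notably, no interpolation property of $\ll$ is needed—only its sup-property and the representation $c=\bigvee\beta(c)$ holding in the completely distributive $L$.
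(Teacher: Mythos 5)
Your proof is correct and follows essentially the same route as the paper's: the necessity direction is the identical application of the sup-property of $b\ll a$ to the join of finite-subfamily meets, and the sufficiency direction is the paper's argument with its final (implicit) step made explicit via the decomposition $P(\mathcal{U})=\bigvee\beta^{*}(P(\mathcal{U}))$, which guarantees that the elements $a$ to which the $a$-fuzzy SP-compactness hypothesis is applied actually lie in $M(L)$. Your version is in fact cleaner than the paper's, which leaves that decomposition unstated and contains notational slips ($\mathcal{U}$ where $\mathcal{V}$ is meant), but the underlying argument is the same.
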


\begin{proof}$(\Rightarrow)$ Suppose that $G$ is fuzzy SP-compact and for any $a\in L_{\top}$, $b\in\beta(a)$ and $\mathcal{U}$ is any $Q_a$-Semi-Preopen cover of $G$. Then, we have\[ \bigwedge_{x\in X}\left(G(x)'\vee\bigvee_{F\in\mathcal{U}}F(x)\right)\leq\bigvee_{\mathcal{V}\in 2^{(\mathcal{U})}}\bigwedge_{x\in X}\left(G(x)'\vee\bigvee_{F\in\mathcal{V}}F(x)\right)\] and $a\leq\bigwedge_{x\in X}\left(G(x)'\vee\bigvee_{F\in\mathcal{U}}F(x)\right)$, so that \[a\leq \bigvee_{\mathcal{V}\in 2^{(\mathcal{U})}}\bigwedge_{x\in X}\left(G(x)'\vee\bigvee_{F\in\mathcal{V}}F(x)\right).\] By $b\in\beta(a)$, we have \[b\leq\bigvee_{\mathcal{V}\in 2^{(\mathcal{U})}}\bigwedge_{x\in X}\left(G(x)'\vee\bigvee_{F\in\mathcal{V}}F(x)\right).\] Hence there exists $\mathcal{V}\in 2^{(\mathcal{U})}$ such that $b\leq\bigwedge_{x\in X}\left(G(x)'\vee\bigvee_{F\in\mathcal{V}}F(x)\right)$. This shows that $b$ is $Q_b$-Semi-Preopen cover of $G$.\medskip

$(\Leftarrow)$ Suppose that each $Q_a$-Semi-Preopen cover $\mathcal{U}$ of $G$ has a finite subfamily $\mathcal{V}$ which is a $Q_a$-Semi-Preopen cover of $G$ for each $b\in\beta(a)$. Then $a\leq\bigwedge_{x\in X}\left(G(x)'\vee\bigvee_{F\in\mathcal{U}}F(x)\right)$ implies that $b\leq\bigwedge_{x\in X}\left(G(x)'\vee\bigvee_{F\in\mathcal{U}}F(x)\right)$. Therefore $a\leq\bigwedge_{x\in X}\left(G(x)'\vee\bigvee_{F\in\mathcal{U}}F(x)\right)$ implies that $b\leq\bigvee_{\mathcal{V}\in 2^{(\mathcal{U})}}\bigwedge_{x\in X}\left(G(x)'\vee\bigvee_{F\in\mathcal{U}}F(x)\right)$. So $a\leq\bigwedge_{x\in X}\left(G(x)'\vee\bigvee_{F\in\mathcal{U}}F(x)\right)$ implies that $\bigvee_{b\in\beta(a)}b\leq\bigvee_{\mathcal{V}\in 2^{(\mathcal{U})}}\bigwedge_{x\in X}\left(G(x)'\vee\bigvee_{F\in\mathcal{U}}F(x)\right)$, i.e, $a\leq\bigwedge_{x\in X}\left(G(x)'\vee\bigvee_{F\in\mathcal{U}}F(x)\right)$ implies that \[a\leq\bigvee_{\mathcal{V}\in 2^{(\mathcal{U})}}\bigwedge_{x\in X}\left(G(x)'\vee\bigvee_{F\in\mathcal{U}}F(x)\right).\] Hence \[\bigwedge_{x\in X}\left(G(x)'\vee\bigvee_{F\in\mathcal{U}}F(x)\right)\leq\bigvee_{\mathcal{V}\in 2^{(\mathcal{U})}}\bigwedge_{x\in X}\left(G(x)'\vee\bigvee_{F\in\mathcal{V}}F(x)\right).\]
\end{proof}

\begin{thm}\label{thmthm}Let $(X,\mathcal{T})$ be an $L$-fuzzy topological space and $G\in L^X$. $G$ is an $L$-fuzzy SP-compact in $(X,\mathcal{T})$ if and only if $G$ is $a$-fuzzy SP-compact in $(X,\mathcal{T}_{[a]})$ for each $a\in M(L)$.
\end{thm}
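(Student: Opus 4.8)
The plan is to prove both directions by passing to the cut $L$-topologies $\mathcal{T}_{[a]}$ and exploiting Theorem 3.1, which for $a\in M(L)$ identifies $\mathcal{T}_{sp}(F)\ge a$ with ``$F$ is Semi-Preopen in $(X,\mathcal{T}_{[a]})$''. This identification is the bridge that converts the degree inequality of Definition 4.1 into the finite-refinement statement of Definition 5.1 and back. For a family $\mathcal{P}\subseteq L^X$ I abbreviate $\mu=\bigwedge_{F\in\mathcal{P}}\mathcal{T}_{sp}(F)$, $\nu=\bigwedge_{x\in X}\big(G'(x)\vee\bigvee_{F\in\mathcal{P}}F(x)\big)$, and write $R(\mathcal{P})=\bigvee_{\mathcal{Q}\in 2^{(\mathcal{P})}}\bigwedge_{x\in X}\big(G'(x)\vee\bigvee_{F\in\mathcal{Q}}F(x)\big)$ for the right-hand side of Definition 4.1, so that $L$-fuzzy SP-compactness reads $\mu\wedge\nu\le R(\mathcal{P})$ for all $\mathcal{P}$.

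For the implication $(\Rightarrow)$, I would fix $a\in M(L)$, pick $b\in\beta(a)$, and let $\mathcal{U}$ be a $Q_a$-Semi-Preopen cover of $G$ in $(X,\mathcal{T}_{[a]})$. By Theorem 3.1 each $F\in\mathcal{U}$ satisfies $\mathcal{T}_{sp}(F)\ge a$, hence $\mu\ge a$ for $\mathcal{P}=\mathcal{U}$, while the $Q_a$-cover condition gives $\nu\ge a$. Definition 4.1 then yields $a\le R(\mathcal{U})$. Since $b\ll a$ and $R(\mathcal{U})$ is the supremum of the family $\{\bigwedge_{x\in X}\big(G'(x)\vee\bigvee_{F\in\mathcal{Q}}F(x)\big):\mathcal{Q}\in 2^{(\mathcal{U})}\}$, the defining property of $\ll$ furnishes a single finite $\mathcal{V}\in 2^{(\mathcal{U})}$ with $b\le\bigwedge_{x\in X}\big(G'(x)\vee\bigvee_{F\in\mathcal{V}}F(x)\big)$; that is, $\mathcal{V}$ is a $Q_b$-cover, and being a subfamily of $\mathcal{U}$ it is a $Q_b$-Semi-Preopen cover. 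This is exactly $a$-fuzzy SP-compactness.

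For $(\Leftarrow)$, let $\mathcal{P}\subseteq L^X$ be arbitrary; I must establish $\mu\wedge\nu\le R(\mathcal{P})$. Since $L$ is completely distributive, $\mu\wedge\nu$ is the join of the co-prime elements way-below it, so it suffices to show $a\le R(\mathcal{P})$ for every $a\in M(L)$ with $a\ll\mu\wedge\nu$. For such an $a$ we have $a\le\mu$ and $a\le\nu$: the former, via Theorem 3.1, makes every $F\in\mathcal{P}$ Semi-Preopen in $(X,\mathcal{T}_{[a]})$, and the latter makes $\mathcal{P}$ a $Q_a$-cover, so $\mathcal{P}$ is a $Q_a$-Semi-Preopen cover of $G$. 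Invoking $a$-fuzzy SP-compactness of $G$ in $(X,\mathcal{T}_{[a]})$, for each $b\in\beta(a)$ there is a finite $\mathcal{Q}\subseteq\mathcal{P}$ that is a $Q_b$-cover, whence $b\le\bigwedge_{x\in X}\big(G'(x)\vee\bigvee_{F\in\mathcal{Q}}F(x)\big)\le R(\mathcal{P})$. Taking the supremum over $b\in\beta(a)$ and using $a=\bigvee\beta(a)$ gives $a\le R(\mathcal{P})$, as needed.

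The two implication chains are routine once the dictionary of Theorem 3.1 is in place; the step demanding real care is the lattice-theoretic reduction in $(\Leftarrow)$. There one must justify testing the global inequality only on molecules $a\in M(L)$ with $a\ll\mu\wedge\nu$ (so that $a$ lies in $M(L)$, as required to apply Definition 5.1), check that $a\ll\mu\wedge\nu$ indeed forces both $a\le\mu$ and $a\le\nu$, and match the quantifier over $b\in\beta(a)$ in Definition 5.1 with the reconstruction $a=\bigvee\beta(a)$. The analogous subtlety in $(\Rightarrow)$ is using $b\ll a$ (rather than merely $b\le a$) to pull a finite subfamily out of the supremum $R(\mathcal{U})$; everything else is a direct substitution into Definitions 4.1 and 5.1.
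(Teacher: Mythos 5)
Your proof is correct and follows essentially the same route as the paper's: both directions hinge on the dictionary $\mathcal{T}_{sp}(F)\geq a \Leftrightarrow F$ Semi-Preopen in $(X,\mathcal{T}_{[a]})$, the $Q_a$-cover formulation, and extraction of finite subfamilies via $b\in\beta(a)$. If anything, you are more careful than the paper, which leaves implicit the final reduction in $(\Leftarrow)$ (testing the inequality only on molecules $a\ll\mu\wedge\nu$ and reconstructing via $a=\bigvee\beta(a)$) that you spell out explicitly.
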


\begin{proof}$(\Rightarrow)$ Since $G$ is $L$-fuzzy SP-compact in $(X,\mathcal{T})$, then for every family $\mathcal{U}\subseteq L^X$, we have \[\bigwedge_{F\in\mathcal{U}}\mathcal{T}_{sp}(F)\wedge\bigwedge_{x\in X}\left(G(x)'\vee\bigvee_{F\in\mathcal{U}}F(x)\right)\leq\bigvee_{\mathcal{V}\in 2^{(\mathcal{U})}}\bigwedge_{x\in X}\left(G(x)'\vee\bigvee_{F\in\mathcal{V}}F(x)\right).\] Hence for each $a\in M(L)$ and $\mathcal{U}\in(\mathcal{T}_{sp})_{[a]}$, we have that\[a\leq\bigwedge_{x\in X}\left(G(x)'\vee\bigvee_{F\in\mathcal{U}}F(x)\right)\Rightarrow a\leq\bigvee_{\mathcal{V}\in 2^{(\mathcal{U})}}\bigwedge_{x\in X}\left(G(x)'\vee\bigvee_{F\in\mathcal{V}}F(x)\right).\]
Thus for each $b\in\beta(a)$, there exists $\mathcal{V}\in 2^{(\mathcal{U})}$ such that $b\leq\bigwedge_{x\in X}\left(G(x)\vee\bigvee_{F\in\mathcal{V}}F(x)\right)$. i.e, for each $a\in M(L)$ and $b\in\beta(a)$, each $Q_a$-Semi-Preopen cover $\mathcal{U}$ of $G$ in $(X,\mathcal{T}_{[a]})$ has a finite subfamily $\mathcal{V}$ which is a $Q_a$-cover. Therefore for each $a\in M(L)$, $G$ is $a$-fuzzy SP-compact in $(X,\mathcal{T}_{[a]})$.\medskip

$(\Leftarrow)$ Suppose that for each $a\in M(L)$, $G$ is $a$-fuzzy SP-compact in $(X,\mathcal{T}_{[a]})$. Let $\mathcal{U}\subseteq L^X$ and $a\leq\bigwedge_{F\in\mathcal{U}}\mathcal{T}_{sp}(F)\wedge\bigwedge_{x\in X}\left(G(x)'\vee\bigvee_{F\in\mathcal{U}}F(x)\right)$. Then $a\leq\bigwedge_{F\in\mathcal{U}}\mathcal{T}_{sp}(F)$ and $a\leq \bigwedge_{x\in X}\left(G(x)'\vee\bigvee_{F\in\mathcal{U}}F(x)\right)$, i.e, $\mathcal{U}\subseteq (\mathcal{T}_{sp})_{[a]}$ and $a\leq \bigwedge_{x\in X}\left(G(x)'\vee\bigvee_{F\in\mathcal{U}}F(x)\right)$. Thus for each $b\in\beta(a)$, there exists $\mathcal{V}\in 2^{(\mathcal{U})}$ such that $b\leq\bigwedge_{x\in X}\left(G(x)'\vee\bigvee_{F\in\mathcal{V}}F(x)\right)$. So that $a\leq\bigvee_{\mathcal{V}\in 2^{(\mathcal{U})}}\bigwedge_{x\in X}\left(G(x)'\vee\bigvee_{F\in\mathcal{V}}F(x)\right)$. Therefore $G$ is $L$-fuzzy SP-compact in $(X,\mathcal{T})$.
\end{proof}

Analogous to Theorem 3.8 in \cite{BAI}, we can obtain the following lemma:

\begin{lem}\label{lemlemlem} Let $(X,\mathcal{T})$ be an $L$-topological space, $a\in M(L)$ and $G$, $H\in L^X$. If $G$ is $a$-fuzzy SP-compact and $H$ is Semi-Preclosed, then $G\wedge H$ is $a$-fuzzy SP-compact.
\end{lem}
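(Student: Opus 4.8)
The plan is to transport the classical argument ``a closed subset of a compact set is compact'' into the $a$-fuzzy SP-compact setting, the only new ingredient being the Semi-Preclosedness of $H$. First I would fix $a\in M(L)$, take an arbitrary $Q_a$-Semi-Preopen cover $\mathcal{U}$ of $G\wedge H$, and fix $b\in\beta(a)$; the goal is to extract a finite subfamily $\mathcal{V}\subseteq\mathcal{U}$ that is a $Q_b$-Semi-Preopen cover of $G\wedge H$.

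The key move is to adjoin $H'$ to the given cover. Since $H$ is Semi-Preclosed, $H'$ is Semi-Preopen, so $\mathcal{U}\cup\{H'\}$ is again a family of Semi-Preopen $L$-subsets. Using the DeMorgan identity $(G\wedge H)'=G'\vee H'$, the $Q_a$-cover condition for $\mathcal{U}$ over $G\wedge H$ reads, for every $x\in X$,
\[G'(x)\vee H'(x)\vee\bigvee_{A\in\mathcal{U}}A(x)\geq a,\]
which is precisely the assertion that $\mathcal{U}\cup\{H'\}$ is a $Q_a$-Semi-Preopen cover of $G$. I would then invoke the $a$-fuzzy SP-compactness of $G$ with the chosen $b\in\beta(a)$ to produce a finite subfamily $\mathcal{V}^{\ast}\subseteq\mathcal{U}\cup\{H'\}$ that is a $Q_b$-Semi-Preopen cover of $G$.

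Finally I would discard $H'$, setting $\mathcal{V}=\mathcal{V}^{\ast}\setminus\{H'\}$, a finite subfamily of the original $\mathcal{U}$. To check that $\mathcal{V}$ is a $Q_b$-cover of $G\wedge H$, observe that whether or not $H'$ belonged to $\mathcal{V}^{\ast}$ one has $\bigvee_{A\in\mathcal{V}^{\ast}}A(x)\leq H'(x)\vee\bigvee_{A\in\mathcal{V}}A(x)$ for each $x$. Combining this with the $Q_b$-bound $G'(x)\vee\bigvee_{A\in\mathcal{V}^{\ast}}A(x)\geq b$ yields $G'(x)\vee H'(x)\vee\bigvee_{A\in\mathcal{V}}A(x)\geq b$, that is, $(G\wedge H)'(x)\vee\bigvee_{A\in\mathcal{V}}A(x)\geq b$. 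Hence $\mathcal{V}$ is a finite $Q_b$-Semi-Preopen cover of $G\wedge H$, which is exactly what the definition of $a$-fuzzy SP-compactness of $G\wedge H$ requires.

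The argument is short, and the only place demanding care is this last step: one must verify that deleting $H'$ from $\mathcal{V}^{\ast}$ does not weaken the cover of $G\wedge H$. This works precisely because the complement $(G\wedge H)'$ already absorbs the contribution of $H'$ through $(G\wedge H)'=G'\vee H'$, so the slack lost by removing $H'$ is recovered by the $H'$-summand hidden in $(G\wedge H)'$. I expect this bookkeeping, together with keeping track that $\mathcal{V}$ remains a subfamily of the original $\mathcal{U}$, to be the main (and rather minor) obstacle; everything else is a direct translation of Definitions 2.4, 2.6 and 5.1.
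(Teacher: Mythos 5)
Your proof is correct. Note that the paper itself gives no argument for this lemma: it only remarks that the result can be obtained ``analogous to Theorem 3.8 in \cite{BAI}'', so your write-up supplies exactly the standard argument that analogy points to --- adjoin $H'$ (Semi-Preopen because $H$ is Semi-Preclosed) to a $Q_a$-Semi-Preopen cover $\mathcal{U}$ of $G\wedge H$, observe via $(G\wedge H)'=G'\vee H'$ that $\mathcal{U}\cup\{H'\}$ is a $Q_a$-Semi-Preopen cover of $G$, extract a finite $Q_b$-subcover of $G$ using $a$-fuzzy SP-compactness, and discard $H'$, the loss being absorbed by the $H'$ summand inside $(G\wedge H)'$. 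Your bookkeeping in the last step, including the harmless case where $H'$ already lies in $\mathcal{U}$, is sound, so the proposal is a complete proof of the statement the paper leaves to the reader.
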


\begin{thm}Let $(X,\mathcal{T})$ be an $L$-fuzzy topological space and $G\in L^X$. If $G$ is an $L$-fuzzy SP-compact and $\mathcal{T}_{sp}^*(H)=\top$, then $G\wedge H$ is $L$-fuzzy SP-compact.
\end{thm}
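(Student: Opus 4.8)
The plan is to reduce the fuzzy statement to its stratified (level-set) counterpart and then invoke Lemma \ref{lemlemlem} at each level. The two tools I would chain together are Theorem \ref{thmthm}, which characterizes $L$-fuzzy SP-compactness of $G$ as $a$-fuzzy SP-compactness of $G$ in each $L$-topological space $(X,\mathcal{T}_{[a]})$ with $a\in M(L)$, and the level-set theorem of Section 3, which identifies membership in $(\mathcal{T}_{sp})_{[a]}$ with being Semi-Preopen in $\mathcal{T}_{[a]}$ for $a\in M(L)$.

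First I would unpack the hypothesis on $H$. Since $\mathcal{T}_{sp}^*(H)=\mathcal{T}_{sp}(H')=\top$, we have $\mathcal{T}_{sp}(H')\geq a$, hence $H'\in(\mathcal{T}_{sp})_{[a]}$, for every $a\in M(L)$. Applying the level-set theorem gives that $H'$ is Semi-Preopen in $\mathcal{T}_{[a]}$, equivalently that $H$ is Semi-Preclosed in $(X,\mathcal{T}_{[a]})$, for each $a\in M(L)$. This is the only place where the full strength of $\mathcal{T}_{sp}^*(H)=\top$ is used, and it is the step I regard as the crux, since it translates the single fuzzy degree of Semi-Preclosedness into ordinary Semi-Preclosedness simultaneously at every level.

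Next I would use the forward direction of Theorem \ref{thmthm}: as $G$ is $L$-fuzzy SP-compact in $(X,\mathcal{T})$, it is $a$-fuzzy SP-compact in $(X,\mathcal{T}_{[a]})$ for each $a\in M(L)$. Fixing an arbitrary $a\in M(L)$, I then have both hypotheses of Lemma \ref{lemlemlem} available in the $L$-topological space $(X,\mathcal{T}_{[a]})$, namely that $G$ is $a$-fuzzy SP-compact there and that $H$ is Semi-Preclosed there. The lemma accordingly yields that $G\wedge H$ is $a$-fuzzy SP-compact in $(X,\mathcal{T}_{[a]})$.

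Finally, since $a\in M(L)$ was arbitrary, $G\wedge H$ is $a$-fuzzy SP-compact in $(X,\mathcal{T}_{[a]})$ for every $a\in M(L)$, and the converse direction of Theorem \ref{thmthm} immediately upgrades this to $L$-fuzzy SP-compactness of $G\wedge H$ in $(X,\mathcal{T})$. I do not anticipate a genuine obstacle: the argument is a clean two-way passage through Theorem \ref{thmthm}, with the Section-3 level-set theorem serving as the bridge that converts $\mathcal{T}_{sp}^*(H)=\top$ into the Semi-Preclosedness of $H$ needed by Lemma \ref{lemlemlem}.
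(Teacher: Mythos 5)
Your proposal is correct and is essentially the paper's own argument: the paper disposes of this theorem with the single line that it is immediate from Lemma \ref{lemlemlem}, and the reasoning it leaves implicit is precisely your chain of passing to the level spaces $(X,\mathcal{T}_{[a]})$ via Theorem \ref{thmthm}, converting $\mathcal{T}_{sp}^*(H)=\top$ into Semi-Preclosedness of $H$ in each $\mathcal{T}_{[a]}$ via the Section-3 level-set theorem, applying Lemma \ref{lemlemlem} levelwise, and returning via the converse direction of Theorem \ref{thmthm}. You have simply written out in full the details the paper omits.
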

\begin{proof}This is immediate from Lemma \ref{lemlemlem}.
\end{proof}

Analogous to Theorem 3.7 in \cite{BAI}, we can obtain the following lemma:

\begin{lem}
Let $(X,\mathcal{T})$ be an $L$-topological space, $a\in M(L)$ and $G$, $H\in L^X$. If $G$, $H$ are $a$-fuzzy SP-compact, then $G\vee H$ is $a$-fuzzy SP-compact.
\end{lem}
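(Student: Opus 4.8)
The plan is to mimic the classical argument that a finite union of compact subsets is compact, adapted to the degree-theoretic language of $a$-fuzzy SP-compactness. Fix $a\in M(L)$, a value $b\in\beta(a)$, and an arbitrary $Q_a$-Semi-Preopen cover $\mathcal{U}$ of $G\vee H$; the goal is to extract a finite subfamily of $\mathcal{U}$ that is a $Q_b$-Semi-Preopen cover of $G\vee H$.

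First I would observe that a $Q_a$-cover of $G\vee H$ is automatically a $Q_a$-cover of each of $G$ and $H$. This follows from the De Morgan law $(G\vee H)'=G'\wedge H'$: since $G'\wedge H'\leq G'$ and $G'\wedge H'\leq H'$, we obtain for every $x\in X$ the inequality $G'(x)\vee\bigvee_{A\in\mathcal{U}}A(x)\geq(G\vee H)'(x)\vee\bigvee_{A\in\mathcal{U}}A(x)\geq a$, and the analogous one for $H$. Hence $\mathcal{U}$ is a $Q_a$-Semi-Preopen cover of $G$ and of $H$ separately.

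Next I would apply the hypothesis to each piece. Since $G$ is $a$-fuzzy SP-compact and $b\in\beta(a)$, there is a finite $\mathcal{V}_1\subseteq\mathcal{U}$ that is a $Q_b$-cover of $G$; likewise, $a$-fuzzy SP-compactness of $H$ yields a finite $\mathcal{V}_2\subseteq\mathcal{U}$ that is a $Q_b$-cover of $H$. Setting $\mathcal{V}=\mathcal{V}_1\cup\mathcal{V}_2$ gives a finite subfamily of $\mathcal{U}$, still consisting of Semi-Preopen $L$-subsets, with $\bigvee_{A\in\mathcal{V}}A(x)\geq\bigvee_{A\in\mathcal{V}_1}A(x)$ and $\bigvee_{A\in\mathcal{V}}A(x)\geq\bigvee_{A\in\mathcal{V}_2}A(x)$ for every $x$.

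The one computational step, which I expect to be the crux although it is routine, is verifying that $\mathcal{V}$ is a $Q_b$-cover of $G\vee H$. Here I would invoke distributivity of $L$, guaranteed since $L$ is a complete DeMorgan algebra: for each $x\in X$,
\[(G\vee H)'(x)\vee\bigvee_{A\in\mathcal{V}}A(x)=\left(G'(x)\vee\bigvee_{A\in\mathcal{V}}A(x)\right)\wedge\left(H'(x)\vee\bigvee_{A\in\mathcal{V}}A(x)\right)\geq b\wedge b=b,\]
where each factor is $\geq b$ because it dominates $G'(x)\vee\bigvee_{A\in\mathcal{V}_1}A(x)\geq b$ and $H'(x)\vee\bigvee_{A\in\mathcal{V}_2}A(x)\geq b$, respectively. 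Since both $b\in\beta(a)$ and the $Q_a$-Semi-Preopen cover $\mathcal{U}$ were arbitrary, this shows $G\vee H$ is $a$-fuzzy SP-compact. The only place where care is genuinely needed is ensuring the distributive identity applies, which is exactly the De Morgan/distributivity structure of $L$, and noting that no multiplicativity hypothesis on $\beta$ (such as $\beta(c\wedge d)=\beta(c)\wedge\beta(d)$) is required here, since the whole argument stays below the fixed threshold $b$.
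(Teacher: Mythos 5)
Your proof is correct and complete. Note that the paper itself supplies no argument for this lemma: it only remarks that the result follows ``analogously to Theorem 3.7'' of \cite{BAI}, where the union property is established for the inequality-form notion of fuzzy SP-compactness (Definition 2.6 of this paper) by manipulating the whole expression $\bigwedge_{x\in X}\bigl((G\vee H)'(x)\vee\bigvee_{A\in\mathcal{U}}A(x)\bigr)$ against its finite-subfamily suprema. Your argument instead verifies directly the $Q_a$-cover/$\beta(a)$ formulation in which the lemma is actually stated: restrict the given cover to $G$ and to $H$ via $(G\vee H)'=G'\wedge H'$, extract the two finite $Q_b$-subcovers, and recombine them with the binary distributive law $(u\wedge v)\vee c=(u\vee c)\wedge(v\vee c)$. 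This buys two things: it is self-contained rather than by citation, and it needs only the finite distributivity built into the notion of a De Morgan algebra, not complete distributivity. Your closing observation is also the right one: because the threshold $b\in\beta(a)$ is fixed throughout, no multiplicativity of $\beta$ such as $\beta(c\wedge d)=\beta(c)\wedge\beta(d)$ is needed, in contrast with the paper's Theorem 4.3, which must assume it. The one point worth making explicit is that the recombination step is precisely where distributivity is indispensable --- in a nondistributive complete lattice with involution, $(u\vee c)\wedge(v\vee c)\geq b$ does not force $(u\wedge v)\vee c\geq b$ --- so the proof stands exactly because the standing hypothesis makes $L$ a distributive lattice with an order-reversing involution, as you flagged.
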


\begin{thm}
Let $(X,\mathcal{T})$ be an $L$-fuzzy topological space and $G$, $H\in L^X$. If $G$, $H$ are $L$-fuzzy SP-compact, then $G\vee H$ is $L$-fuzzy SP-compact.
\end{thm}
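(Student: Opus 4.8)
The plan is to reduce the $L$-fuzzy statement to its level-set counterpart via Theorem~\ref{thmthm} and then invoke the corresponding lemma for $L$-topological spaces. First I would observe that the theorem asserting the equivalence ``$G$ is $L$-fuzzy SP-compact in $(X,\mathcal{T})$ iff $G$ is $a$-fuzzy SP-compact in $(X,\mathcal{T}_{[a]})$ for each $a\in M(L)$'' applies equally to $H$ and to the join $G\vee H$, so the entire argument can be carried out one molecule $a\in M(L)$ at a time. Thus the skeleton is: (i) fix an arbitrary $a\in M(L)$; (ii) use Theorem~\ref{thmthm} on $G$ and on $H$ to pass from their $L$-fuzzy SP-compactness in $(X,\mathcal{T})$ to $a$-fuzzy SP-compactness of each in the level space $(X,\mathcal{T}_{[a]})$; (iii) apply the preceding lemma (the one stating that a join of two $a$-fuzzy SP-compact $L$-subsets is $a$-fuzzy SP-compact) to conclude that $G\vee H$ is $a$-fuzzy SP-compact in $(X,\mathcal{T}_{[a]})$; (iv) since $a$ was arbitrary, apply Theorem~\ref{thmthm} once more, now in the reverse direction, to lift $a$-fuzzy SP-compactness of $G\vee H$ at every $a\in M(L)$ back to full $L$-fuzzy SP-compactness of $G\vee H$ in $(X,\mathcal{T})$.

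The key observation making this work is that Theorem~\ref{thmthm} is an exact characterization (an ``if and only if'' over all $a\in M(L)$), so no information is lost in descending to level sets and ascending again; the whole proof is a clean sandwich of two applications of that characterization around a single application of the level-set lemma. I would write it compactly: ``By Theorem~\ref{thmthm}, for each $a\in M(L)$ both $G$ and $H$ are $a$-fuzzy SP-compact in $(X,\mathcal{T}_{[a]})$. By the preceding lemma, $G\vee H$ is then $a$-fuzzy SP-compact in $(X,\mathcal{T}_{[a]})$ for every $a\in M(L)$. Applying Theorem~\ref{thmthm} again yields that $G\vee H$ is $L$-fuzzy SP-compact in $(X,\mathcal{T})$.''

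I do not anticipate a genuine obstacle, since the heavy lifting is already packaged in Theorem~\ref{thmthm} and in the level-set lemma, exactly as the companion theorem for $G\wedge H$ was dispatched by a one-line appeal to Lemma~\ref{lemlemlem}. The one point deserving care is verifying that the lattice-theoretic hypotheses are uniform: the level-set lemma is stated for $a\in M(L)$ with no extra distributivity assumption, and Theorem~\ref{thmthm} likewise quantifies over all $a\in M(L)$, so the two dovetail without needing the condition $\beta(c\wedge d)=\beta(c)\wedge\beta(d)$ that appears in the earlier characterization theorem. As long as one confirms that the lemma's conclusion is stated at the same molecule $a$ for which $G$ and $H$ were assumed compact — which it is — the argument closes immediately, and the proof reduces to essentially the same ``immediate from the lemma'' remark used for the meet case.
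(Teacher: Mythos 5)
Your proposal is correct and is exactly the argument the paper intends: the paper's own ``proof'' is the single word ``Straightforward,'' with the preceding lemma on joins of $a$-fuzzy SP-compact sets supplied precisely so that the Theorem~\ref{thmthm} sandwich you describe (descend to each level $(X,\mathcal{T}_{[a]})$, apply the lemma, ascend) closes the argument, mirroring the treatment of the meet case via Lemma~\ref{lemlemlem}. Your added remark that no distributivity hypothesis such as $\beta(c\wedge d)=\beta(c)\wedge\beta(d)$ is needed here is also accurate.
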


\begin{proof}Straightforward.
\end{proof}

\begin{lem}\label{lemlem}Let $(X,\mathcal{T})$ and $(Y,\mathcal{U})$ be two $L$-topological spaces, $a\in M(L)$, $G\in L^X$ and $f:(X,\mathcal{T})\rightarrow (Y,\mathcal{U})$ be an $L$-Semi-Preirresolute function. If $G$ is $a$-fuzzy SP-compact in $(X,\mathcal{T})$, then $f_L^\rightarrow(G)$ is $a$-fuzzy SP-compact in $(Y,\mathcal{U})$.
\end{lem}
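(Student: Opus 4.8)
The plan is to transport a Semi-Preopen cover of $f_L^\rightarrow(G)$ back to $X$ along $f_L^\leftarrow$, apply the $a$-fuzzy SP-compactness of $G$ there, and then push the resulting finite subcover forward again. Fix $a\in M(L)$, let $b\in\beta(a)$, and let $\mathcal{W}\subseteq L^Y$ be any $Q_a$-Semi-Preopen cover of $f_L^\rightarrow(G)$. First I would form the pullback family $f_L^\leftarrow(\mathcal{W})=\{f_L^\leftarrow(V)\mid V\in\mathcal{W}\}$. Since $f$ is $L$-Semi-Preirresolute and each $V\in\mathcal{W}$ is Semi-Preopen in $(Y,\mathcal{U})$, every $f_L^\leftarrow(V)$ is Semi-Preopen in $(X,\mathcal{T})$, so $f_L^\leftarrow(\mathcal{W})$ is a Semi-Preopen family.

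Next I would verify that $f_L^\leftarrow(\mathcal{W})$ is a $Q_a$-cover of $G$. Using $f_L^\leftarrow(V)(x)=V(f(x))$ together with the pointwise inequality $(f_L^\rightarrow(G))'(f(x))\leq G'(x)$ --- which holds because $f_L^\rightarrow(G)(f(x))=\bigvee_{f(x')=f(x)}G(x')\geq G(x)$ and hence, by De Morgan, $(f_L^\rightarrow(G))'(f(x))\leq G(x)'$ --- one obtains for each $x\in X$
\[
G'(x)\vee\bigvee_{V\in\mathcal{W}}f_L^\leftarrow(V)(x)\;\geq\;(f_L^\rightarrow(G))'(f(x))\vee\bigvee_{V\in\mathcal{W}}V(f(x))\;\geq\;a,
\]
the last step being the $Q_a$-cover property of $\mathcal{W}$ evaluated at $y=f(x)$. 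Thus $f_L^\leftarrow(\mathcal{W})$ is a $Q_a$-Semi-Preopen cover of $G$, and the $a$-fuzzy SP-compactness of $G$ yields finitely many $V_1,\dots,V_n\in\mathcal{W}$ for which $\{f_L^\leftarrow(V_1),\dots,f_L^\leftarrow(V_n)\}$ is a $Q_b$-cover of $G$; explicitly, $G'(x)\vee\bigvee_{i=1}^{n}V_i(f(x))\geq b$ for every $x\in X$.

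The last step is to check that $\mathcal{V}=\{V_1,\dots,V_n\}$ is a $Q_b$-cover of $f_L^\rightarrow(G)$, and this is where I expect the only genuine work. Fix $y\in Y$. If $f^{-1}(y)=\emptyset$ then $f_L^\rightarrow(G)(y)=\bot$, so $(f_L^\rightarrow(G))'(y)=\top\geq b$ and the inequality is immediate; otherwise, put $c=\bigvee_{i=1}^{n}V_i(y)$ and note $V_i(f(x))=V_i(y)$ whenever $f(x)=y$. Taking the infimum of $G'(x)\vee c\geq b$ over all $x$ with $f(x)=y$ and invoking the complete distributivity of $L$ and De Morgan gives
\[
b\;\leq\;\bigwedge_{f(x)=y}\bigl(G'(x)\vee c\bigr)\;=\;\Bigl(\bigwedge_{f(x)=y}G'(x)\Bigr)\vee c\;=\;(f_L^\rightarrow(G))'(y)\vee\bigvee_{i=1}^{n}V_i(y),
\]
which is the $Q_b$-cover condition for $\mathcal{V}$ at $y$. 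As $y$ was arbitrary and $\mathcal{V}$ is finite, $\mathcal{V}$ is the desired $Q_b$-Semi-Preopen subcover, so $f_L^\rightarrow(G)$ is $a$-fuzzy SP-compact. The main obstacle is precisely the identity $\bigwedge_{f(x)=y}(G'(x)\vee c)=(f_L^\rightarrow(G))'(y)\vee c$: it is here that the infinite distributive law of $L$ and its DeMorgan structure are essential, whereas the remaining steps are routine bookkeeping with $f_L^\leftarrow$ and $f_L^\rightarrow$.
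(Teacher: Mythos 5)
Your proof is correct, but there is nothing in the paper to compare it against: Lemma~\ref{lemlem} is stated with no proof at all (like its two neighbours, which the paper explicitly credits as analogues of Theorems 3.7 and 3.8 of \cite{BAI}), so your argument fills a gap rather than paralleling or diverging from an existing one. Each step checks out against the paper's definitions: $L$-Semi-Preirresoluteness makes $f_L^\leftarrow(\mathcal{W})$ a Semi-Preopen family; the inequality $(f_L^\rightarrow(G))'(f(x))\leq G'(x)$ together with $f_L^\leftarrow(V)(x)=V(f(x))$ makes it a $Q_a$-cover of $G$; the definition of $a$-fuzzy SP-compactness then supplies, for the fixed $b\in\beta(a)$, a finite $Q_b$-subcover, whose representatives $V_1,\dots,V_n\in\mathcal{W}$ you push forward correctly, including the corner case $f^{-1}(y)=\emptyset$, where $f_L^\rightarrow(G)(y)=\bigvee\emptyset=\bot$ gives $(f_L^\rightarrow(G))'(y)=\top$. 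The one step that deserves the caveat you already flag is the identity $\bigwedge_{f(x)=y}\bigl(G'(x)\vee c\bigr)=\bigl(\bigwedge_{f(x)=y}G'(x)\bigr)\vee c$: this dual infinite distributive law does \emph{not} follow from the paper's blanket hypothesis that $L$ is a complete DeMorgan algebra, so your appeal to complete distributivity is a genuine extra assumption. It is nonetheless legitimate in context, because the definition of $a$-fuzzy SP-compactness quantifies over $b\in\beta(a)$, and the greatest-minimal-family machinery $\beta(\cdot)$ is itself only available in a completely distributive DeMorgan algebra --- the setting in force throughout the compactness portions of this paper and in \cite{BAI} and \cite{13}; stating that reliance explicitly, as you do, is exactly right.
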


\begin{thm}Let $(X,\mathcal{T})$ and $(Y,\mathcal{U})$ be two $L$-fuzzy topological spaces,  $G\in L^X$ and $f:(X,\mathcal{T})\rightarrow (Y,\mathcal{U})$ be an $L$-fuzzy Semi-Preirresolute function. If $G$ is $L$-fuzzy SP-compact in $(X,\mathcal{T})$, then $f_L^\rightarrow(G)$ is $L$-fuzzy SP-compact in $(Y,\mathcal{U})$.
\end{thm}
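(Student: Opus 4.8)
The plan is to prove this by reducing the $L$-fuzzy statement to a family of level ($L$-topological) statements, one for each $a\in M(L)$, and then invoking the level-wise Lemma \ref{lemlem}. The two ingredients that make this reduction work are Theorem \ref{thmthm}, which characterizes $L$-fuzzy SP-compactness level-by-level, and the characterization of $L$-fuzzy Semi-Preirresolute functions in terms of the induced maps $f:(X,\mathcal{T}_{[a]})\rightarrow (Y,\mathcal{U}_{[a]})$ established earlier in Section 3.

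First I would fix an arbitrary $a\in M(L)$. From the hypothesis that $G$ is $L$-fuzzy SP-compact in $(X,\mathcal{T})$, Theorem \ref{thmthm} gives that $G$ is $a$-fuzzy SP-compact in the level $L$-topological space $(X,\mathcal{T}_{[a]})$. From the hypothesis that $f$ is $L$-fuzzy Semi-Preirresolute, the level characterization of Semi-Preirresolute functions gives that the induced map $f:(X,\mathcal{T}_{[a]})\rightarrow (Y,\mathcal{U}_{[a]})$ is $L$-Semi-Preirresolute.

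Next I would feed these two level-wise facts into Lemma \ref{lemlem} at this same $a$: since $G$ is $a$-fuzzy SP-compact in $(X,\mathcal{T}_{[a]})$ and $f:(X,\mathcal{T}_{[a]})\rightarrow (Y,\mathcal{U}_{[a]})$ is $L$-Semi-Preirresolute, the lemma yields that $f_L^\rightarrow(G)$ is $a$-fuzzy SP-compact in $(Y,\mathcal{U}_{[a]})$. Because $a\in M(L)$ was arbitrary, this holds for every $a\in M(L)$, so a final application of Theorem \ref{thmthm}, now to the space $(Y,\mathcal{U})$ and the $L$-subset $f_L^\rightarrow(G)$, converts the whole family of level-wise statements back into the assertion that $f_L^\rightarrow(G)$ is $L$-fuzzy SP-compact in $(Y,\mathcal{U})$, completing the proof.

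The genuine mathematical content sits entirely in Lemma \ref{lemlem}, whose proof is parallel to the corresponding $L$-topological result in \cite{BAI}; at the level of this theorem the argument is a clean assembly. The one point that needs care is the bookkeeping of the index $a$: the three facts used (SP-compactness of $G$, Semi-Preirresoluteness of the induced map, and the conclusion for $f_L^\rightarrow(G)$) must all be taken at one and the same $a\in M(L)$, and the codomain level topology must be exactly $\mathcal{U}_{[a]}$, so that Lemma \ref{lemlem} and the reverse direction of Theorem \ref{thmthm} apply without any mismatch.
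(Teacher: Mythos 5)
Your proof is correct and is essentially identical to the paper's own argument: reduce to levels via Theorem \ref{thmthm}, transfer Semi-Preirresoluteness to the level maps via the earlier level characterization, apply Lemma \ref{lemlem} at each $a\in M(L)$, and reassemble with Theorem \ref{thmthm}. In fact you cite the ingredients more carefully than the paper does, which mistakenly attributes the Semi-Preirresolute level characterization to Theorem \ref{thmthm} rather than to the corresponding theorem in Section 3.
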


\begin{proof} Since $G$ is $L$-fuzzy SP-compact in $(X,\mathcal{T})$, by Theorem \ref{thmthm}, $G$ is $a$-fuzzy SP-compact in $(X,\mathcal{T}_{[a]})$ for each $a\in M(L)$. By Theorem \ref{thmthm}, $f:(X,\mathcal{T}_{[a]})\rightarrow (Y,\mathcal{U}_{[a]})$ is $L$-Semi-Preirresolute. So that $f_L^\rightarrow(G)$ is $a$-fuzzy SP-compact in $(Y,\mathcal{U}_{[a]})$ (by Lemma \ref{lemlem}). Therefore $f_L^\rightarrow(G)$ is $L$-fuzzy SP-compact in $(Y,\mathcal{U})$.
\end{proof}


\providecommand{\bysame}{\leavevmode\hbox to3em{\hrulefill}\thinspace}
\providecommand{\MR}{\relax\ifhmode\unskip\space\fi MR }
\providecommand{\MRhref}[2]{%
  \href{http://www.ams.org/mathscinet-getitem?mr=#1}{#2}
}
\providecommand{\href}[2]{#2}

\end{document}